\newtheorem{theorem}{Theorem}
\newtheorem{lemma}{Lemma}
\newtheorem{claim}{Claim}
\newtheorem{conjecture}{Conjecture}
\newcommand\emitem[1]{\item{ #1}}		
\newcommand \e {\hfill {\tiny $\blacksquare$}}
\newcommand{\case}[1]{\noindent {\bf Case #1.}}
\title{{\bf $(2P_2,K_4)$-Free Graphs are 4-Colorable}}
\author
{
Serge Gaspers\thanks{School of Computer Science and Engineering, UNSW Sydney, Sydney 2052, Australia.}
\thanks{Decision Sciences, Data61, CSIRO, Sydney 2052, Australia.}
\and
Shenwei Huang\thanks{Department of Mathematics, Wilfrid Laurier University, Waterloo N2L3C5, Canada.} 
 }
\date{August 10, 2018}
\begin{document}

\maketitle

\begin{abstract}
In this paper, we show that every $(2P_2,K_4)$-free graph is 4-colorable. 
The bound is attained by the five-wheel and the complement
of the seven-cycle. This answers an open question by Wagon \cite{Wa80} in the 1980s.
Our result can also be viewed as a result in the study of  the Vizing bound for graph classes.
A major open problem in the study of computational complexity of graph coloring is whether 
coloring can be solved in polynomial time for $(4P_1,C_4)$-free graphs. 
Lozin and Malyshev \cite{LM17} conjecture that the answer is yes.
As an application of our main result, 
we provide the first positive evidence to the conjecture by giving a 2-approximation algorithm for coloring $(4P_1,C_4)$-free graphs.
\end{abstract}

{\bf Keywords}: graph coloring; $\chi$-bound; forbidden induced subgraphs; approximation algorithm.

{\bf AMS subject classifications:} 68R10, 05C15, 05C75, 05C85.

\section{Introduction}

All graphs in this paper are finite and simple.
We say that a graph $G$ {\em contains} a graph $H$ if $H$ is
isomorphic to an induced subgraph of $G$.  A graph $G$ is
{\em $H$-free} if it does not contain $H$. 
For a family of graphs $\mathcal{H}$,
$G$ is {\em $\mathcal{H}$-free} if $G$ is $H$-free for every $H\in \mathcal{H}$.
In case $\mathcal{H}$ consists of two graphs, we write
$(H_1,H_2)$-free instead of $\{H_1,H_2\}$-free.
As usual, let $P_n$ and $C_n$ denote
the path and the cycle on $n$ vertices, respectively. The complete
graph on $n$ vertices is denoted by $K_n$. The {\em $n$-wheel} $W_n$ is the graph obtained
from $C_n$ by adding a new vertex and making it adjacent to every vertex in $C_n$.
For two graphs $G$ and $H$, we use $G+H$ to denote the \emph{disjoint union} of $G$ and $H$.
For a positive integer $r$, we use $rG$ to denote the disjoint union of $r$ copies of $G$.
The \emph{complement} of $G$ is denoted by $\overline{G}$.
A {\em hole} in a graph is an induced cycle of length at least 4.
A hole is {\em odd} if it is of odd length.

A \emph{$q$-coloring} of a graph $G$ is a function $\phi:V(G)\longrightarrow \{ 1, \ldots ,q\}$ such that
$\phi(u)\neq \phi(v)$ whenever $u$ and $v$ are adjacent in $G$. We say that $G$ is {\em $q$-colorable} if $G$ admits a $q$-coloring.
The \emph{chromatic number} of  $G$, denoted by $\chi (G)$, is the minimum number $q$ such that $G$ is $q$-colorable.
The \emph{clique number} of $G$, denoted by $\omega(G)$, is the size of a largest clique in $G$.
Obviously, $\chi(G)\ge \omega(G)$ for any graph $G$.
The {\em maximum degree} of a graph $G$ is denoted by $\Delta(G)$.

A family $\mathcal{G}$ of graphs is said to be \emph{$\chi$-bounded} if 
there exists a function $f$ such that for every graph
$G\in \mathcal{G}$ and every induced subgraph $H$ of $G$ it holds that
$\chi(H)\le f(\omega(H))$. The function $f$ is called a \emph{$\chi$-binding} function
for $\mathcal{G}$. The class of perfect graphs (a graph $G$ is \emph{perfect} if for every induced subgraph $H$ of $G$ 
it holds that $\chi(H)=\omega(H)$), for instance, is a  $\chi$-bounded family with $\chi$-binding
function $f(x)=x$. 
Therefore, $\chi$-boundedness is a generalization of perfection.
The notion of $\chi$-bounded families was introduced by Gy{\'a}rf{\'a}s \cite{Gy87} who make the following conjecture.

\begin{conjecture}[Gy{\'a}rf{\'a}s \cite{Gy73}]
For every forest $T$, the class of $T$-free graphs is $\chi$-bounded.
\end{conjecture}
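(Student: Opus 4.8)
Since this is in fact a long-standing open problem, what follows describes the program one would pursue together with the point where it currently stalls, rather than a complete argument. The first step is to reduce to the case of \emph{trees}. Given a forest $T$ with components $T_1,\dots,T_m$, choose a single tree $T^\ast$ that contains every $T_i$ as an induced subgraph --- a ``universal'' tree, e.g., a long path with sufficiently many pendant leaves hung at each internal vertex. Then $T$ is an induced subgraph of $mT^\ast$, so it suffices to prove (a) that $\chi$-boundedness of the class of $H$-free graphs passes to the class of $kH$-free graphs for every connected $H$ and every $k$, and (b) the conjecture for trees. For (a) one takes a maximal family $H_1,\dots,H_t$ of pairwise non-adjacent induced copies of $H$ in a $kH$-free graph $G$; maximality forces $t\le k-1$, and then $G$ with the closed neighbourhood of $V(H_1)\cup\dots\cup V(H_t)$ deleted is $H$-free, hence of bounded $\chi$, while the removed part is bounded in size and controlled by induction on $\omega(G)$ (the neighbourhood of any single vertex induces a graph of smaller clique number). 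Adding the pieces gives a finite $\chi$-binding function for $kH$-free graphs.

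For (b) the natural line of attack is induction on $|V(T)|$: pick a leaf $\ell$ with neighbour $r$, put $T'=T-\ell$, and assume $T'$-free graphs admit a $\chi$-binding function $f'$. Let $G$ be a connected $T$-free graph with $\chi(G)$ large, and perform a breadth-first search from an arbitrary vertex, producing levels $L_0,L_1,\dots$. From the elementary bound $\chi(G)\le 2\max_i\chi(G[L_i])$ --- colour consecutive levels from two disjoint palettes, using a proper colouring inside each level --- some level $L_i$ with $i\ge 1$ satisfies $\chi(G[L_i])$ large. One would then like to extract an induced copy of $T'$ from $G[L_i]$, note that its attachment vertex (the image of $r$) has a neighbour $u\in L_{i-1}$, and conclude that this copy of $T'$ together with $u$ is an induced copy of $T$ in $G$, a contradiction.

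The hard part is exactly this extraction-and-gluing step, and it is where the conjecture genuinely resists. A $T$-free graph need \emph{not} be $T'$-free --- the implication goes the other way --- so one cannot simply say that $G[L_i]$ has bounded $\chi$ and therefore avoids $T'$; and even after a copy of $T'$ is produced inside some structured portion of $G$, the candidate vertex $u\in L_{i-1}$ may be adjacent to further vertices of that copy, wrecking inducedness. Overcoming this needs either a far more precise structural handle on graphs of large chromatic number than is presently known, or a cleverly strengthened induction hypothesis that builds a copy of $T'$ while simultaneously dictating how it hangs off the BFS tree; this is precisely why all known partial results --- paths, stars, brooms, spiders, trees of radius two --- proceed by arguments tailored to the specific tree. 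A realistic next target along these lines would be to push the radius-two machinery one branch deeper, examining a minimal counterexample tree $T$ and the interaction between a large-$\chi$ level and the shortest paths reaching it; I would expect the analysis to break down already at radius three, which is roughly where the frontier sits.
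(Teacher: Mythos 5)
The statement you were asked to prove is not a result of this paper at all: it is Gy\'arf\'as's conjecture, which the paper merely quotes as motivation and leaves open, as it has remained since the 1970s. The paper contains no proof of it --- its actual contribution is the very special concrete instance that $(2P_2,K_4)$-free graphs are $4$-colorable, which bears on the conjecture only as evidence about the optimal $\chi$-binding function for $2P_2$-free graphs at clique number $3$. So there is no argument of the authors to compare yours against, and your submission, as you yourself say, is a research program rather than a proof. Judged as a proof attempt it therefore has a gap, and the gap is exactly the one you name: the extraction-and-gluing step. Knowing that some BFS level $L_i$ has large chromatic number does not let you invoke the induction hypothesis for $T'=T-\ell$, because $T$-freeness of $G$ gives no $T'$-freeness of $G[L_i]$; and even when a copy of $T'$ can be found in a structured piece of $G[L_i]$, a parent vertex $u\in L_{i-1}$ of the attachment vertex may have uncontrolled further neighbours on that copy, so inducedness of the assembled $T$ fails. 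Closing this is the entire content of the conjecture, and no known technique does it in general.

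What you do have right is worth recording. The reduction from forests to trees is sound and standard: a $T$-free graph is $mT^\ast$-free for a suitable tree $T^\ast$ containing all components (note that a subgraph copy of a tree inside a tree is automatically induced), and your argument for step (a) --- take a maximal collection of pairwise anticomplete induced copies of $H$, observe it has at most $k-1$ members, delete the union of their closed neighbourhoods to leave an $H$-free graph, and handle the deleted neighbourhoods by induction on the clique number --- is essentially the classical argument showing that $\chi$-boundedness of $H$-free graphs transfers to $kH$-free graphs for connected $H$. The BFS-levelling bound $\chi(G)\le 2\max_i\chi(G[L_i])$ and the idea of attacking a high-chromatic level is likewise the classical Gy\'arf\'as starting point (it is how the path case is proved), and your description of the frontier --- paths, stars, brooms, spiders, radius-two trees, with partial progress at radius three --- is accurate. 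In short: the proposal is an honest and mostly correct account of the state of the art, but it neither proves the statement nor could the paper have supplied a proof for you to match.
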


Gy{\'a}rf{\'a}s \cite{Gy87} proved the conjecture for $T=P_t$: every $P_t$-free graph
$G$ has $\chi(G)\le (t-1)^{\omega(G)-1}$.  The result was slightly improved by Gravier, Ho{\`a}ng and Maffray
in \cite{GHM03} that every $P_t$-free graph $G$ has $\chi(G)\le (t-2)^{\omega(G)-1}$.
This implies that  every $P_5$-free graph $G$ has $\chi(G)\le 3^{\omega(G)-1}$. 
Note that this $\chi$-binding function is exponential in $\omega(G)$. 
For $\omega(G)=3$, Esperet, Lemoine, Maffray and Morel \cite{ELMM13} obtained the optimal bound on the chromatic number:
every $(P_5,K_4)$-free graph is 5-colorable. They also demonstrated a $(P_5,K_4)$-free graph whose chromatic number is 5.
On the other hand, a polynomial $\chi$-binding function for the class of $2P_2$-free graphs was shown by
Wagon \cite{Wa80} who proved that every such graph has $\chi(G)\le \binom{\omega(G)+1}{2}$. 
This implies that every $(2P_2,K_4)$-free graph is 6-colorable.  In \cite{Wa80} it was asked if there exists
a $(2P_2,K_4)$-free graph whose chromatic number is 5 or 6.
We observe that the $(P_5,K_4)$-free graph with chromatic number 5 given in \cite{ELMM13} contains an induced $2P_2$.

In this paper we settle Wagon's question \cite{Wa80} by proving the following theorem.

\begin{theorem}\label{thm:main}
Every $(2P_2,K_4)$-free graph $G$ has $\chi(G)\le 4$. 
\end{theorem}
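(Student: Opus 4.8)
The strategy is to dispatch some cases with general tools, reduce to a connected graph dominated by a very small set, and then analyse in detail how the remaining vertices attach to it. We may assume $G$ is connected, colouring components independently. If $\omega(G)\le 2$, Wagon's bound $\chi(G)\le\binom{\omega(G)+1}{2}$ already gives $\chi(G)\le 3$, so assume $\omega(G)=3$. If $\Delta(G)\le 4$ then, since $G$ is $K_4$-free and hence $K_5$-free, Brooks' theorem gives $\chi(G)\le 4$ (the exceptional cliques and odd cycles are all $3$-colourable here); so we may also assume $\Delta(G)\ge 5$, and fix a vertex $v$ with $\deg_G(v)\ge 5$.

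Next, the skeleton. Since $G$ is $K_4$-free, $G[N(v)]$ is triangle-free, and being $2P_2$-free as well it is either bipartite or contains an induced $C_5$; in the latter case $v$ and that cycle induce a copy of $W_5$. Independently, $G$ is $2P_2$-free and therefore $P_5$-free, so by the theorem of Bacs\'o and Tuza that every connected $P_5$-free graph has a dominating clique or a dominating induced $P_3$, $G$ has a dominating set $D$ with $G[D]\in\{K_1,K_2,P_3,K_3\}$. We build the colouring on $D$ together with the $W_5$ when it is present; note that the two extremal graphs of the statement appear precisely here, $W_5$ arising in the ``$C_5$ inside $N(v)$'' subcase while $\overline{C_7}$, having $\Delta=4$, is already absorbed by Brooks above.

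Now the structure around the skeleton. Label each $x\notin D$ by its nonempty \emph{trace} $N(x)\cap D$ and group vertices by trace. From $K_4$-freeness: every trace-class induces a triangle-free graph (a triangle in a class together with any vertex of that class's trace would be a $K_4$), and a class whose trace is an edge of $D$ is an independent set. From $2P_2$-freeness: two vertex-disjoint edges inside one class directly form an induced $2P_2$, so each class induces a star plus isolated vertices; and similar ``no two disjoint edges'' arguments severely restrict the bipartite graphs between distinct classes. When $G$ contains $W_5$ one further uses the short list of admissible attachments to a wheel (no vertex is adjacent to the hub and to two consecutive rim vertices, etc.). These constraints cut the problem down to a bounded number of genuinely distinct configurations.

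Finally, on each configuration one writes down an explicit $4$-colouring: the at most three vertices of $D$ receive distinct colours among $\{1,2,3\}$; each two-vertex-trace class is thereby forced onto a colour in $\{3,4\}$ or its symmetric analogue; and every remaining class, being a star plus an independent set, is coloured greedily using the tameness of the inter-class bipartite graphs. I expect the main obstacle to be exactly this last, heavily case-based stage, and within it the subcases where $G$ contains an induced $W_5$ or an induced $\overline{C_7}$: there one must track how \emph{every} other vertex of $G$ hangs off the six- or seven-vertex core and verify that the colours already spent on the core extend to all of $G$ without needing a fifth. The reductions and the choice of skeleton are routine by comparison.
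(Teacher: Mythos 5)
Your proposal is a plan rather than a proof, and two of its load-bearing steps do not hold up. First, the structural claim that ``two vertex-disjoint edges inside one class directly form an induced $2P_2$, so each class induces a star plus isolated vertices'' is false: two disjoint edges within a trace class are an induced $2P_2$ only if there are no edges between them, and a class can perfectly well induce a $C_4$, a $C_5$, or any chain (complete-bipartite-like) graph, all of which are $2P_2$-free and triangle-free but are not stars plus isolated vertices. Since the subsequent ``bounded number of genuinely distinct configurations'' and the greedy extension of the colouring rest on this classification, the final stage of your argument collapses as stated; and you yourself flag that this last, heavily case-based stage is not carried out. That stage is precisely where all the difficulty lies: the paper needs roughly fifteen structural properties around an induced $C_5$, the removal of comparable vertices, and several pages of explicit case-by-case $4$-colourings organized around the graphs $H_1$, $H_2$, $W_5$ and $C_5$ (Lemmas~\ref{lem:F1}--\ref{lem:C5}); nothing in your sketch substitutes for that work.

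Second, your reduction leaves the case where $G$ has no induced $C_5$ at all essentially untreated. Brooks' theorem only disposes of $\Delta(G)\le 4$, and when $\Delta(G)\ge 5$ but $G[N(v)]$ is bipartite for every $v$, your plan falls back on the Bacs\'o--Tuza dominating set and the (flawed) trace-class analysis, with no argument given. In the paper this residual case is exactly the $(2P_2,K_4)$-free graphs with no odd hole (note $2P_2$-freeness already excludes holes of length at least six), and it is settled only by invoking the theorem of Chudnovsky, Robertson, Seymour and Thomas that (odd hole, $K_4$)-free graphs are $4$-colourable --- a deep external result. Without either that theorem or a genuinely new argument for this case, and without a corrected and completed configuration analysis for the cases containing $W_5$ or $C_5$, your proposal does not establish the bound $\chi(G)\le 4$.
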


The bound in \autoref{thm:main} is attained by  the five-wheel $W_5$ and the complement of a seven-cycle $\overline{C_7}$. 
Hence, we obtain the optimal $\chi$-bound for the class of $2P_2$-free graphs when the clique number is 3.
A family $\mathcal{G}$ of graph is said to satisfy the {\em Vizing bound} if $f(x)=x+1$ is a $\chi$-binding function for $\mathcal{G}$.
The definition was motivated by the classical Vizing's Theorem \cite{Vi64} on the chromatic index  $\chi'(G)$ of graphs which states that
$\chi'(G)\le \Delta(G)+1$ for any graph $G$. This is equivalent to say that the class of line graphs satisfies the Vizing bound.
Our result (\autoref{thm:main}) shows that the class of $(2P_2,K_4)$-free graphs also satisfies the Vizing bound.
We refer to Randerath and Schiermeyer \cite{RS04} and Fan, Xu, Ye and Yu \cite{FXYY14} for more results on the Vizing bound
for various $\mathcal{H}$-free graphs.

We also note that our proofs of \autoref{thm:main} below are algorithmic: one can easily follow the steps of the proof and give a 4-coloring
of the input graph in polynomial time.

\medskip
\noindent {\bf An application.} Let \textsc{Coloring} denoted the computational problem of determining the chromatic number of a graph.
In the past two decades, there has been an overwhelming attention on the complexity of \textsc{Coloring} $\mathcal{H}$-free graphs.
The starting point is a result due to Kr\'al', Kratochv\'{\i}l, Tuza, and Woeginger~\cite{KKTW01} who
gave a complete classification of the complexity of  \textsc{Coloring} for the case where ${\cal H}$ consists of a single graph~$H$:
if $H$ is an induced subgraph of $P_4$ or of $P_1+ P_3$, then
\textsc{Coloring} restricted to $H$-free graphs is polynomial-time solvable, otherwise it is NP-complete. 
Afterwards, researchers started to study \textsc{Coloring} restricted to $(H_1,H_2)$-free graphs.
Despite much efforts of top researchers in the area the complexity of \textsc{Coloring} are known only for some
pairs of $H_1$ and $H_2$, see \cite{GJPS17} for a summary of the known partial results.
Even solving the problem for particular pairs of $H_1$ and $H_2$ requires substantial work, see
\cite{DLRR12, Ma14, HL15, HJP15, LM17,KMP18} for instance.  
Lozin and Malyshev \cite{LM17} demonstrated
that the classification is already problematic even if both $H_1$ and $H_2$ are $4$-vertex graphs:
they determined the complexity of \textsc{Coloring} for all such pairs with three exceptions.
One of the three unknown pairs is $(4P_1,C_4)$. Lozin and Malyshev \cite{LM17} conjecture that \textsc{Coloring}
can be solved in polynomial time for $(4P_1,C_4)$-free graphs. The problem was listed as an important
open problem in the survey on the computational complexity of coloring graphs with forbidden subgraphs
by Golovach, Johnson, Paulusma and Song \cite{GJPS17}.

Here we use \autoref{thm:main} to give a 2-approximation algorithm
for coloring $(4P_1,C_4)$-free graphs. This is the first general result towards a polynomial-time algorithm for the problem,
although Fraser, Hamel, Ho\`ang, Holmes, and LaMantia showed that the problem is polynomial time solvable for a subclass of
$(4P_1,C_4)$-free graphs \cite{FHHHL17}.
For a graph $G$ and a subset $S\subseteq V(G)$, we denote by $G[S]$ the subgraph of $G$ induced by $S$.
A graph is {\em chordal} if it is $C_t$-free for each $t\ge 4$.
\begin{theorem}
There exists a polynomial-time $2$-approximation algorithm for coloring $(4P_1,C_4)$-free graphs.
\end{theorem}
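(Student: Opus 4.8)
The plan is to reduce to \autoref{thm:main} by passing to the complement. Let $G$ be the input $(4P_1,C_4)$-free graph on $n$ vertices (we may first verify membership in the class in polynomial time by inspecting all $4$-element vertex subsets). Since $\overline{4P_1}=K_4$ and $\overline{C_4}=2P_2$, the complement $\overline{G}$ is $(2P_2,K_4)$-free, so \autoref{thm:main} gives $\chi(\overline{G})\le 4$; moreover, as the proof of \autoref{thm:main} is algorithmic, a proper $4$-coloring of $\overline{G}$ can be produced in polynomial time. Its color classes are cliques of $G$, so we obtain a partition $V(G)=Q_1\cup Q_2\cup Q_3\cup Q_4$ into at most four cliques, each satisfying $|Q_i|\le\omega(G)\le\chi(G)$.

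Next I would group the cliques into two parts, say $A=Q_1\cup Q_2$ and $B=Q_3\cup Q_4$ (if $\chi(\overline{G})<4$ there are fewer cliques and one pairs them up analogously, which only improves the bound below). Both $G[A]$ and $G[B]$ are \emph{co-bipartite}, i.e., complements of bipartite graphs, since each is covered by two cliques of $G$. Co-bipartite graphs are perfect, and an optimal coloring of a co-bipartite graph $H$ whose vertex set is partitioned into two cliques can be computed in polynomial time: every color class of size two is exactly a non-edge of $H$ with one endpoint in each clique, hence an optimal coloring corresponds to a maximum matching in the bipartite graph of non-edges between the two cliques, $\chi(H)$ equals $|V(H)|$ minus the size of such a matching, and a polynomial-time maximum-matching algorithm yields both the matching and the corresponding coloring. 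Using this subroutine, color $G[A]$ optimally with colors $1,\dots,\omega(G[A])$ and color $G[B]$ optimally with the disjoint palette $\omega(G[A])+1,\dots,\omega(G[A])+\omega(G[B])$. The outcome is a proper coloring of $G$ that uses
\[
\omega(G[A])+\omega(G[B])\ \le\ \omega(G)+\omega(G)\ =\ 2\,\omega(G)\ \le\ 2\,\chi(G)
\]
colors, which is the desired $2$-approximation, and every step runs in polynomial time.

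I do not expect a genuine obstacle here: the substantive ingredient is \autoref{thm:main} together with the fact that its proof is constructive, and everything else is the short reduction above plus a standard matching computation. The two spots that deserve care are spelling out and proving correctness of the optimal-coloring subroutine for co-bipartite graphs, and, more pedantically, confirming that the algorithmic version of \autoref{thm:main} actually outputs an explicit clique partition of $G$ rather than merely asserting its existence.
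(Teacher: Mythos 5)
Your proposal is correct and follows essentially the same route as the paper: complement the graph, invoke \autoref{thm:main} (and its algorithmic proof) to partition $G$ into at most four cliques, pair them into two halves, color each half optimally with disjoint palettes, and bound the total by $2\chi(G)$. The only difference is the subroutine for optimally coloring the union of two cliques: the paper observes that $C_4$-freeness of $G$ makes each half chordal and applies a chordal-graph coloring algorithm, whereas you exploit co-bipartiteness and compute a maximum matching of non-edges between the two cliques (K\H{o}nig-type argument), which is equally valid and in fact does not even use $C_4$-freeness at that stage. Both arguments are sound; your matching-based subroutine is slightly more self-contained, while the paper's chordality observation lets it cite an off-the-shelf linear-time algorithm.
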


\begin{proof}
Let $G$ be a $(4P_1,C_4)$-free graph. Then $\overline{G}$ is $(2P_2,K_4)$-free.
By \autoref{thm:main}, we have that $\overline{G}$ can be partitioned into 4 stable sets.
So, $G$ can be partitioned into 4 cliques $K_i$ for $1\le i\le 4$, and this partition can be found in polynomial time.
Since $G$ is $C_4$-free, both $G[K_1\cup K_2]$ and $G[K_3\cup K_4]$ are chordal. 
It is well-known that the chromatic number of a chordal graph can be determined in linear time, see \cite{Go04} for example.
Therefore, the value $ \chi(G[K_1\cup K_2])+\chi(G[K_3\cup K_4])$ provides a 2-approximation
for $\chi(G)$.
\end{proof}

We now turn to the proof of \autoref{thm:main}.
The \emph{neighborhood} of a vertex $v$ in a graph $G$, denoted by $N_G(v)$, is the set of neighbors of $v$.
We simply write $N(v)$ if the graph $G$ is clear from the context.
Two nonadjacent vertices $u$ and $v$ in $G$ are \emph{comparable} if either $N(v)\subseteq N(u)$ or  $N(u)\subseteq N(v)$. 
Observe that if $N(u)\subseteq N(v)$, then $\chi(G-u)=\chi(G)$. 
Therefore, it suffices to prove \autoref{thm:main} for every connected $(2P_2,K_4)$-free graph with no pair of comparable vertices.
We do so by proving a number of lemmas below.
The idea is that we assume the occurrence of some induced subgraph $H$ in $G$
and then argue that the theorem holds in this case. Afterwards, we can assume that
$G$ is $H$-free in addition to being $(2P_2,K_4)$-free. We then pick a different induced subgraph as
$H$ and repeat. In the end, we are able to show that the theorem holds if $G$ contains a $C_5$
(see \autoref{lem:F1}-\autoref{lem:C5} below). Therefore, the remaining case is that $G$ is (odd hole, $K_4$)-free. In this case,
the theorem follows from a known result by Chudnovsky, Robertson, Seymour and Thomas \cite{CRST10} that every (odd hole, $K_4$)-free graph is $4$-colorable. 
This proves \autoref{thm:main}.

\begin{figure}[tb]
\centering
\begin{subfigure}{.5\textwidth}
\centering
\begin{tikzpicture}[scale=0.7]
\tikzstyle{vertex}=[draw, circle, fill=white!100, minimum width=4pt,inner sep=2pt]

\node [vertex] (v1) at (0,3) {$w$};
\node [vertex] (v2) at (3,1) {$2$};
\node [vertex] (v3) at (1.5,-1) {$6$};
\node [vertex] (v4) at (-1.5,-1) {$3$};
\node [vertex] (v5) at (-3,1) {$1$};
\draw 
(v1)--(v2)--(v3)--(v4)--(v5)--(v1);

\node[vertex] (y) at (1.5,0.25) {$4$};
\draw (v1)--(y)  (v2)--(y) (v3)--(y) (v5)--(y)  ;

\node[vertex] (z) at (-1.5,0.25) {$5$};
\draw (v4)--(z) (v5)--(z)  (v1)--(z) (v2)--(z);

\node at (0,-2) {$H_1$};
\end{tikzpicture}
\end{subfigure}%
\begin{subfigure}{.5\textwidth}
\centering
\begin{tikzpicture}[scale=0.7]
\tikzstyle{vertex}=[draw, circle, fill=white!100, minimum width=4pt,inner sep=2pt]

\node [vertex] (v1) at (0,3) {$1$};
\node [vertex] (v2) at (3,1) {$2$};
\node [vertex] (v3) at (1.5,-1) {$3$};
\node [vertex] (v4) at (-1.5,-1) {$4$};
\node [vertex] (v5) at (-3,1) {$5$};
\draw 
(v1)--(v2)--(v3)--(v4)--(v5)--(v1);

\node [vertex] (t) at (0,1) {$t$};
\draw (t)--(v5)  (t)--(v2)  (t)--(v3)  (t)--(v4);

\node at (0,-2) {$H_2$};
\end{tikzpicture}
\end{subfigure}
\caption{Two special graphs $H_1$ and $H_2$.}
\label{fig:F1F2}
\end{figure}
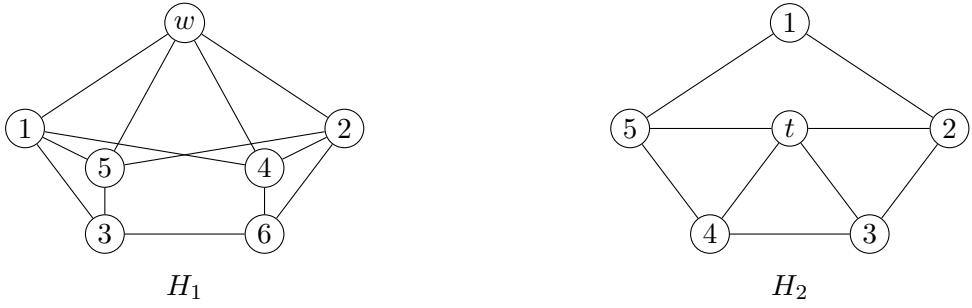

Th proof idea is based on a paper by Esperet et al.~\cite{ELMM13} who proved that every $(P_5,K_4)$-free graph is 5-colorable.
In particular, the graph $H_1$ (see \autoref{fig:F1F2}) that plays an important role in our proof was also used in \cite{ELMM13}.
However, to prove 4-colorability we need to use the argument of comparable vertices and 
extensively extend the structural analysis in \cite{ELMM13}.
The remainder of the paper is organized as follows. 
In \autoref{sec:pre} we present some preliminary results.
In \autoref{sec:F1} and \autoref{sec:F2} we prove \autoref{lem:F1} and  \autoref{lem:F2}, respectively.
We then prove \autoref{lem:W5} and \autoref{lem:C5} in \autoref{sec:W5C5}.
\section{Preliminaries}\label{sec:pre}

We present the structure around a five-cycle in $(2P_2,K_4)$-free graphs that will be used
in \autoref{sec:F2} and \autoref{sec:W5C5}.
Let $G$ be a $(2P_2,K_4)$-free graph and $C=12345$ be an induced $C_5$ of $G$.
All indices below are modulo 5. We partition  $V\setminus C$ into the following subsets:
\begin{equation*} \label{eq1}
\begin{split}
Z & = \{v\in V\setminus C: N_{C}(v)=\emptyset\}, \\
R_i  & = \{v\in V\setminus C: N_{C}(v)=\{i-1,i+1\}\}, \\
Y_i  & = \{v\in V\setminus C: N_{C}(v)=\{i-2, i, i+2\}\}, \\
F_i  & = \{v\in V\setminus C: N_{C}(v)=C\setminus \{i\}\}, \\
U & = \{v\in V\setminus C: N_{C}(v)=C\}. \\
\end{split}
\end{equation*}

\begin{lemma}\label{lem:partition}
Let $G$ be a $(2P_2,K_4)$-free graph and $C=12345$ be an induced $C_5$ of $G$.
Then $V(G)=C\cup Z\cup (\bigcup_{i=1}^{5}R_i)\cup (\bigcup_{i=1}^{5}Y_i)\cup (\bigcup_{i=1}^{5}F_i)\cup U$.
\end{lemma}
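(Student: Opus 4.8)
Lemma \ref{lem:partition} claims that the sets $Z$, $R_i$, $Y_i$, $F_i$, $U$ together with $C$ partition all of $V(G)$. Since these sets are defined by their neighborhood pattern on $C$, this is really a claim that: every vertex $v \in V \setminus C$ has $N_C(v)$ equal to one of the listed patterns — namely $\emptyset$, a pair of vertices at distance 2 on the cycle, a set of three vertices forming $\{i-2, i, i+2\}$, the complement of a single vertex, or all of $C$. The key tool is $(2P_2, K_4)$-freeness. Let me think about which neighborhood patterns on $C_5$ are excluded.
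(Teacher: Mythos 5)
Your proposal stops before the proof starts: you correctly reformulate the lemma as the claim that every vertex $v\in V(G)\setminus C$ has $N_C(v)$ equal to one of the listed patterns, but you never actually exclude any pattern. The entire content of the lemma is exactly that exclusion, and no argument for it is given; ``Let me think about which neighborhood patterns on $C_5$ are excluded'' is a plan, not a proof. As it stands there is nothing to check for correctness.

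What is missing is the following short argument (essentially the paper's). The patterns not covered by $Z, R_i, Y_i, F_i, U$ are: exactly one neighbor, two consecutive neighbors, and three consecutive neighbors (note that any set of four neighbors is some $C\setminus\{i\}$, five is $U$, two neighbors at distance two is some $R_i$, and three neighbors of the form $\{i-2,i,i+2\}$ is some $Y_i$). In each remaining case the neighbors of $v$ on $C$ are at most three consecutive vertices, so after relabelling one may assume $v$ is adjacent to $1$ and nonadjacent to $3$ and $4$; then the edges $34$ and $1v$ induce a $2P_2$ (vertex $1$ is nonadjacent to $3$ and $4$ on the cycle), contradicting $2P_2$-freeness. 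Note also that only $2P_2$-freeness is used here; $K_4$-freeness plays no role in this particular lemma, so invoking it as ``the key tool'' is slightly off target. You need to supply this case analysis and the explicit $2P_2$ for the proposal to become a proof.
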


\begin{proof}
Suppose that there is a vertex $v\in V(G)\setminus C$ that does not belong to any of $Z$, $R_i$, $Y_i$, $F_i$ and $U$.
Note that $v$ has at least one and at most three neighbors on $C$. Moreover, these neighbors must be consecutive on $C$.
Without loss of generality, we may assume that $v$ is adjacent to $1$ and not adjacent to $3$ and $4$.
Now $34$ and $1v$ induce a $2P_2$.
\end{proof}

We now prove some structural properties of these sets.

\begin{enumerate}
\item {$Z\cup R_i$ is an independent set.}\label{itm:ZR}

If $Z\cup R_i$ contains an edge $xy$, then $xy$ and $(i-2)(i+2)$ induce a $2P_2$, a contradiction. \e

\item {$U\cup Y_i$ and $U\cup F_i$ are independent sets.}\label{itm:UYF}

If either $U\cup Y_i$ or $U\cup F_i$ contains an edge $xy$, then $\{x,y,i-2,i+2\}$ induces a $K_4$. \e

\item {$R_i$ and $R_{i+1}$ are complete.}\label{itm:RiRi+1}

It suffices to prove for $i=1$. If $r_1\in R_1$ and $r_2\in R_2$ are not adjacent, then $5r_1$ and $3r_2$
induce a $2P_2$. \e

\item {$Y_i$ and $Y_{i+1}$ are complete.}\label{itm:YiYi+1}

It suffices to prove for $i=1$. If $y_1\in Y_1$ and $y_2\in Y_2$ are not adjacent, then $5y_2$ and $3y_1$
induce a $2P_2$. \e

\item {$R_i$ and $Y_i$ are complete.}\label{itm:RiYi}

 It suffices to prove for $i=1$. If $r_1\in R_1$ and $y_1\in Y_1$ are not adjacent, then $5r_1$ and $3y_1$
induce a $2P_2$. \e

\item {Either $R_i$ and $Y_{i+1}$ are anti-complete or $R_{i+1}$ and $Y_i$ are anti-complete.}\label{itm:RiYi+1Ri+1Yi}

 Suppose, by contradiction, that there exist vertices $r_i\in R_i$, $r_{i+1}\in R_{i+1}$,
$y_i\in Y_i$, $y_{i+1}\in Y_{i+1}$ such that $r_i$ and $r_{i+1}$ are adjacent to $y_{i+1}$ and $y_i$, respectively.
Then it follows from \ref{itm:RiRi+1}, \ref{itm:YiYi+1} and \ref{itm:RiYi} that $\{r_i,r_{i+1},y_i,y_{i+1}\}$
induces a $K_4$. \e

\item {Each vertex in $Y_i$ is anti-complete to either $Y_{i-2}$ or $Y_{i+2}$.}\label{itm:YiYi-2Yi+2}

It suffices to prove for $i=1$. If $y_1\in Y_1$ is adjacent to a vertex $y_i\in Y_i$
for $i=3,4$, then $\{1,y_1,y_3,y_4\}$ induces a $K_4$ by \ref{itm:YiYi+1}. \e

\item {$F_i$ is complete to $Y_{i-2}\cup Y_{i+2}$ and anti-complete to $Y_{i-1}\cup Y_i\cup Y_{i+1}$.}\label{itm:FY}

It suffices to prove for $i=5$. Let $f\in F_5$. 
Recall that $f$ is adjacent to $1,2,3,4$ but not adjacent to $5$ by the definition of $F_5$.
Suppose first that $f$ is not adjacent to a vertex $y\in Y_2\cup Y_3$.
Note that $y$ is adjacent to $5$ by the definition of $Y_2$ and $Y_3$.
Now either $3f$ or $2f$ forms a $2P_2$ with $5y$ depending on whether $y\in Y_2$ or $y\in Y_3$.
This proves the first part of \ref{itm:FY}. Suppose now that $f$ is adjacent to a vertex $y\in Y_i$
for some $i\in \{1,4,5\}$. Since $i\notin \{2,3\}$, it follows that $5\notin \{i-2,i+2\}$.
Therefore, $f$ is adjacent to $i-2$ and $i+2$. This implies that $\{f,y,i-2,i+2\}$ induces a $K_4$. 
This proves the second part of \ref{itm:FY}. \e

\item {$F_i$ is complete to $R_{i-1}\cup R_{i+1}$.}\label{itm:FR}

It suffices to prove $i=5$. If $f\in F_5$ is not adjacent to $r\in R_1\cup R_4$, then
either $f3$ or $f2$ forms a $2P_2$ with $5r$ depending on whether $r\in R_1$ or $r\in R_4$. \e

\item {If $U\neq \emptyset$, then $Y_i$ and $Y_{i+2}$ are anti-complete.}\label{itm:YiYi+2}

 Let $u\in U$. If $y_i\in Y_i$ and $y_{i+2}\in Y_{i+2}$ are adjacent,
then $y_iy_{i+2}$ and $u(i+1)$ induce a $2P_2$ since $u$ is adjacent to neither $y_i$ nor $y_{i+2}$
by \ref{itm:UYF}, a contradiction. \e

\item {Either $F_i$ or $F_{i+2}$ is empty.}\label{itm:FiFi+2}

 It suffices to prove for $i=3$. Suppose that $F_i$ contains a vertex $f_i\in F_i$
for $i=3,5$. Then either $3f_5$ and $5f_3$ induce a $2P_2$ or $\{1,2,f_3,f_5\}$ induces a $K_4$
depending on whether $f_3$ and $f_5$ are nonadjacent or not. \e 

\item {If $G$ is $H_1$-free, then the following holds:
if $F_i\neq \emptyset$, then $R_{i+1}$ is anti-complete to $Y_{i+2}\cup Y_i$
and $R_{i-1}$ is anti-complete to $Y_{i-2}\cup Y_i$.}\label{itm:Ri+1YiYi+2}

It suffices to prove for $i=5$. Let $f\in F_5$. 
Suppose, by contradiction, that there exists vertices $r\in R_1$ and $y\in Y_2\cup Y_5$ such that
$r$ and $y$ are adjacent. Note that $f$ is adjacent to $r$ by \ref{itm:FR}.
If $y\in Y_2$, then $f$ is adjacent to $y$ by \ref{itm:FY} and this implies that
$\{f,y,r,2\}$ induces a $K_4$. If $y\in Y_5$, then $f$ is not adjacent to $y$ by \ref{itm:FY}
and this implies that $C\cup \setminus \{1\}\cup \{f,y,r\}$ induces an $H_1$ (see \autoref{fig:F1F2}). 
This proves that $R_1$ is anti-complete to $Y_2\cup Y_5$. The proof for the second part is symmetric. \e

\item {Each vertex in $R_i$ is anti-complete to either $Y_{i+1}$ or $Y_{i+2}$.
By symmetry, each vertex in $R_i$ is anti-complete to either $Y_{i-1}$ or $Y_{i-2}$}\label{itm:RiYi+1Yi+2}

Suppose, by contradiction, that there exists a vertex $r_i\in R_i$ such that
$r_i$ is adjacent to a vertex $y_{i+1}\in Y_{i+1}$ and a vertex  $y_{i+2}\in Y_{i+2}$.
By \ref{itm:YiYi+1}, $y_{i+1}$ and $y_{i+2}$ are adjacent. This implies that
$\{r_i,y_{i+1},y_{i+2},i-1\}$ induces a $K_4$. \e
\end{enumerate}

\section{Eliminate $H_1$}\label{sec:F1}

In this section we show that our main theorem, \autoref{thm:main}, holds when $G$ is connected, has no pair of comparable vertices, and contains $H_1$ as an induced subgraph.

\begin{lemma}\label{lem:F1}
Let $G$ be a connected $(2P_2,K_4)$-free graph with no pair of comparable vertices.
If $G$ contains an induced $H_1$, then $\chi(G)\le 4$.
\end{lemma}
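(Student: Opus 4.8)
The goal is to show that a connected $(2P_2,K_4)$-free graph $G$ with no pair of comparable vertices and containing an induced copy of $H_1$ satisfies $\chi(G)\le 4$. My plan is to fix an induced copy of $H_1$ inside $G$; note that $H_1$ already contains an induced $C_5$, say $C=12345$ (using the labels $1,2,6,3$ wait—rather I mean the outer five-cycle of $H_1$, whose vertices in Figure~\ref{fig:F1F2} are $w,2,6,3,1$ in cyclic order), together with two extra vertices (labelled $4$ and $5$) that are ``almost dominating'' — each missing exactly one vertex of the cycle. So these two extra vertices live in sets of the form $F_i$ in the partition set up in Section~\ref{sec:pre}. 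I would then invoke \autoref{lem:partition} to decompose $V(G)$ relative to the chosen $C_5$ into $C\cup Z\cup(\bigcup R_i)\cup(\bigcup Y_i)\cup(\bigcup F_i)\cup U$, and exploit the structural facts \ref{itm:ZR}--\ref{itm:RiYi+1Yi+2}, which pin down the adjacencies between these parts very tightly.

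\textbf{Main line of argument.}

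With the copy of $H_1$ identified, at least one set $F_i$ is nonempty; by \ref{itm:FiFi+2} at most one of each antipodal pair $\{F_i,F_{i+2}\}$ is nonempty, so the nonempty $F$-sets form a ``path'' in the pentagon (at most two consecutive ones, or possibly more — I'd carefully enumerate, but the $2P_2$-freeness should limit us to at most two nonempty $F_i$ and they must be such that no $2P_2$ appears). The heart of the proof is to construct an explicit partition of $V(G)$ into four stable sets. The natural starting point: color the cycle $C$ with three colors (say $1\mapsto a$, $2\mapsto b$, $3\mapsto a$, $4\mapsto b$, $5\mapsto c$), put $Z$ and each $R_i$ into color classes (each $R_i$ is stable by \ref{itm:ZR}, and $R_i$–$R_{i+1}$ complete by \ref{itm:RiRi+1}, so the $R_i$'s behave like a pentagon of blobs), fold the $Y_i$'s in using \ref{itm:YiYi+1}, \ref{itm:YiYi-2Yi+2}, \ref{itm:YiYi+2}, and place $U$ (a stable set together with each $Y_i$ by \ref{itm:UYF}) into whichever class is free. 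The key leverage of the $H_1$-containment is \ref{itm:Ri+1YiYi+2}: when $F_i\ne\emptyset$, the relevant $R_{i\pm1}$ are anticomplete to two of the $Y$-sets, which should kill enough edges between the ``blobs'' that the contracted graph on the blobs becomes 4-colorable. I would treat the few cases for which $F_i$ are nonempty and, in each, write down the four classes and verify (using the itemized facts, not ad hoc checks) that each is stable and every vertex is covered.

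\textbf{Handling comparable vertices and the residual cases.}

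Two subtleties need care. First, the ``no comparable vertices'' hypothesis is needed to rule out pathological configurations — e.g. if some blob is forced to be large or if $Z\ne\emptyset$, a vertex of $Z$ whose neighborhood sits inside that of some cycle or $F$-vertex would be comparable, contradiction; so I expect to argue that $Z$-vertices have neighbors spreading across several blobs, constraining the structure (this is likely where connectivity also enters: every vertex of $Z$ must reach $C$ through a path, and $2P_2$-freeness forces it close). Second, after the main construction there may be a residual subcase — for instance $U\ne\emptyset$ simultaneously with two nonempty $F_i$ — where the greedy four-class assignment collides; there I'd fall back on showing $G$ restricted to the troublesome part has a small clique cover or that an induced $H_1$-plus-extra-vertex would force a $K_4$ or $2P_2$.

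\textbf{Anticipated main obstacle.}

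The hard part is the bookkeeping: assembling the four stable classes so that \emph{simultaneously} the cycle, all ten of the $R_i$/$Y_i$ blobs, the (at most two) $F$-blobs, $U$, and $Z$ fit, while each cross-pair adjacency dictated by \ref{itm:ZR}--\ref{itm:RiYi+1Yi+2} is respected. I expect the proof to break into a handful of cases according to which $F_i$ are nonempty and whether $U,Z$ are empty, with the $H_1$-specific fact \ref{itm:Ri+1YiYi+2} doing the decisive work in the worst case; verifying stability of the four classes in each case is routine but lengthy, so I would organize it as a short case analysis with the adjacency facts cited by number rather than re-derived.
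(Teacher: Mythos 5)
Your plan is to reuse the Section~\ref{sec:pre} decomposition around a $C_5$ contained in the copy of $H_1$ (with the two remaining vertices of $H_1$ landing in two consecutive sets $F_i,F_{i+1}$), and you explicitly nominate property \ref{itm:Ri+1YiYi+2} as the fact that does ``the decisive work.'' That is where the proposal breaks: \ref{itm:Ri+1YiYi+2} is stated and proved only under the hypothesis that $G$ is \emph{$H_1$-free}, and its proof works precisely by exhibiting an induced $H_1$ when the asserted anticompleteness fails. In \autoref{lem:F1} the hypothesis is the exact opposite --- $G$ \emph{contains} an induced $H_1$ --- so the anticompleteness between $R_{i\pm1}$ and the relevant $Y$-sets is not available to you; such edges can genuinely occur in your setting. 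With that lever gone, nothing in your sketch establishes that the four proposed classes are stable, and the parts you defer (the case $U\ne\emptyset$ with two nonempty $F$-sets, the placement of $Z$, the exact role of connectivity and of the no-comparable-vertices hypothesis) are exactly where the difficulty sits; ``fall back on a small clique cover'' is not an argument. So the proposal has a genuine gap and, as written, is a plan rather than a proof.

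For comparison, the paper does not use the $C_5$-based partition of Section~\ref{sec:pre} at all in this lemma (that machinery is introduced for \autoref{sec:F2} and \autoref{sec:W5C5}). Instead it decomposes $V(G)$ by neighborhoods in the $\overline{C_6}$ of $H_1$ into sets $Z$, $D_{i,i+1}$, $T_i$, $F_{i,i+1}$, $W$, chooses the copy of $H_1$ so that $|T|+|F|$ is maximized (an exchange argument that is genuinely needed, e.g.\ in the proof of \ref{itm:DT}), uses the no-comparable-vertices hypothesis concretely to force $Z=\emptyset$ and, via \autoref{cla:T1T2T4T5} and \autoref{cla:T3T6}, to empty out $T_1\cup T_5\cup T_6$, and only then produces explicit 4-colorings in a case analysis governed by which of $F_{1,2}$ and $F_{4,5}$ are empty. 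None of these ingredients appears in your sketch, and the one structural fact you do rely on points in the wrong direction; to salvage your route you would have to prove, from scratch, substitutes for \ref{itm:Ri+1YiYi+2}-type control that are valid in the presence of $H_1$, which is essentially the content of the paper's Section~\ref{sec:F1}.
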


\begin{proof}
Let $H=C\cup \{w\}$ be an induced $H_1$ in $G$ where $C=\{1,2,3,4,5,6\}$ induces a
$\overline{C_6}$ such that $ij$ is an edge if and only if $|i-j|\neq 1$, and $w$ is adjacent to 1, 2, 4 and 5
(See \autoref{fig:F1F2}).
All the indices below are modulo 6. We partition $V(G)$ into following subsets:
\begin{equation*} \label{eq1}
\begin{split}
Z & = \{v\in V\setminus C: N_{C}(v)=\emptyset\}, \\
D_{i,i+1}  & = \{v\in V\setminus C: N_{C}(v)=\{i,i+1\}\}, \\
T_i  & = \{v\in V\setminus C: N_{C}(v)=\{i-1,i,i+1\}\}, \\
F_{i,i+1}  & = \{v\in V\setminus C: N_{C}(v)=\{i-1, i, i+1, i+2\}\}, \\
W  & = \{v\in V\setminus C: N_{C}(v)=N_C(w)=\{1,2,4,5\}\}. \\
\end{split}
\end{equation*}
Let $D=\bigcup_{i=1}^{6}D_{i,i+1}$,  $T=\bigcup_{i=1}^{6}T_{i}$ and $F=\bigcup_{i=1}^{6}F_{i,i+1}$.
Without loss of generality, we assume $H$ has been chosen such that $|T|+|F|$ is maximized.
We first show that $V(G)=C\cup Z\cup D\cup T\cup F\cup W$.

\begin{enumerate}
\emitem {There is no vertex $v\in V\setminus C$ such that $v$ is adjacent to $i$ but adjacent to neither $i-1$ nor $i+1$
for any $1\le i\le 6$.}\label{itm:1}

Suppose that such a vertex $v$ exists. Then it follows that $vi$ and $(i-1)(i+1)$ induce a $2P_2$. \e

\emitem{If a vertex in $V\setminus C$ has at most two neighbors on $C$, then $v\in Z\cup D$.}\label{itm:2} 

Suppose not. Let $v\in V\setminus C$ that has at most two neighbors on $C$ and $v\notin Z\cup D$.
Then either $v$ has exactly one neighbor on $C$ or has two neighbors on $C$ that are not consecutive.
By symmetry, we may assume that $v$ is adjacent to $1$ but not adjacent to $2$ and $6$. This contradicts
\ref{itm:1}. \e

\emitem{If a vertex $v\in V\setminus C$ that has exactly three neighbors on $C$, then $v\in T$.}\label{itm:3}

Suppose not.  Let $v\in V\setminus C$ that has exactly at three neighbors on $C$.
By symmetry, we may assume that $v$ is adjacent to $1$.
It follows from \ref{itm:1} that $v$ is adjacent to either $2$ or $6$, say 2. 
If $v$ is not adjacent to $3$ or $6$, then it contradicts
\ref{itm:1} for $i=4$ or $i=5$. Therefore, $v\in T_1$ or $v\in T_2$. \e

\emitem{If a vertex $v\in V\setminus C$ that has exactly four neighbors on $C$, then $v\in F\cup W$.}\label{itm:4}

By \ref{itm:1}, $v$ must have two consecutive neighbors on $C$.
If $v$ has three consecutive neighbors on $C$, then all four neighbors must be consecutive
by \ref{itm:1} and so $v\in F$. Now $N_C(v)=\{i,i+1,i+3,i+4\}$ for some $i$. If $i=1$, then $v\in W$.
Suppose that $i=2$ (and the case $i=3$ is symmetric). Then either $w1$ and $v6$ induce
a $2P_2$ or $\{w,v,2,5\}$ induces a $K_4$, depending on whether $w$ and $v$ are nonadjacent or not. \e

\emitem {There is no vertex in $V\setminus C$ that has more than four neighbors.}\label{itm:5}

Suppose not. Let $v\in V\setminus C$ have at least five neighbors on $C$. 
By symmetry, we may assume that $v$ is adjacent to $i$ for each $1\le i\le 5$.
Then $\{1,3,5,v\}$ induces a $K_4$. \e
\end{enumerate}

It follows from \ref{itm:2}-\ref{itm:5} that $V(G)=C\cup Z\cup D\cup T\cup F\cup W$.
Note that each of the subsets defined is an independent set since $G$ is $(2P_2,K_4)$-free.
We further investigate the adjacency among those subsets.
\begin{enumerate}
\setcounter{enumi}{5}

\emitem {The set $W$ is anti-complete to $Z$.}\label{itm:WZ}

If $w\in W$ and $z\in Z$ are adjacent, then $wz$ and $36$ induce a $2P_2$, a contradiction. \e

\emitem {The set $W$ is complete to $D_{i,i+1}$ for $i\in \{2,3,5,6\}$ and anti-complete to $D_{i,i+1}$ for $i\in \{1,4\}$.}\label{itm:WD}

Suppose that $w\in W$ is not adjacent some vertex $d\in D_{i,i+1}$ for some $i\in \{2,3,5,6\}$.
By symmetry, we may assume that $i=2$. Then $d3$ and $w4$ induce a $2P_2$, a contradiction.  
Suppose that $w\in W$ is adjacent some vertex $d\in D_{1,2}\cup D_{4,5}$.
Then $dw$ and $36$ induce a $2P_2$, a contradiction.  \e

\emitem {The set $W$ is complete to $T_1\cup T_2\cup T_4\cup T_5$ and anti-complete to $T_3\cup T_6$.}\label{itm:WT}

Suppose that $w\in W$ is not adjacent some vertex $t\in T_{i}$ for some $i\in \{1,2,4,5\}$.
By symmetry, we may assume that $i=1$. Then $t6$ and $w5$ induce a $2P_2$.
Suppose that $w\in W$ is adjacent some vertex $t\in T_{i}$ for some $i\in \{3,6\}$.
By symmetry, we may assume that $i=3$. Then $\{w,t,2,4\}$ induces a $K_4$. \e

\emitem {The set $W$ is anti-complete to $F_{i,i+1}$ for $i\in \{2,3,5,6\}$ and complete to $F_{i,i+1}$ for $i\in \{1,4\}$.}\label{itm:WF}

Suppose that $w\in W$ is adjacent some vertex $f\in F_{i,i+1}$ for some $i\in \{2,3,5,6\}$.
By symmetry, we may assume that $i=2$. Then $\{f,w,1,4\}$ induces a $K_4$. 
Suppose that $w\in W$ is not adjacent some vertex $f\in F_{i,i+1}$ for some $i\in \{1,4\}$.
By symmetry, we may assume that $i=1$.  Then $6f$ and $5w$ induce a $2P_2$. \e

\emitem {The set $Z$ is anti-complete to $D\cup T\cup (F\setminus (F_{1,2}\cup F_{4,5}))$.}\label{itm:Z}

Suppose that $z\in Z$ is adjacent to some vertex $x\in D\cup T\cup (F\setminus (F_{1,2}\cup F_{4,5}))$.
If $x\in D\cup T$, then there exists a vertex $i\in C$ such that $x$ is not adjacent to $i-1$ and $i+1$.
Then $zx$ and $(i-1)(i+1)$ induce a $2P_2$.  If $x\in F_{i,i+1}$ for some $i=2,3,5,6$,
then $xw\notin E$ by \ref{itm:WF}. Moreover, there exists a vertex $j\in N_C(w)$
such that $xj\notin E$. Then $wj$ and $zx$ induce a $2P_2$. \e

\end{enumerate}

It follows from and \ref{itm:WZ} and \ref{itm:Z} that any vertex in $Z$ has neighbors only in $F_{1,2}\cup F_{4,5}$.
On the other hand, $w$ is complete to $F_{1,2}\cup F_{4,5}$ by \ref{itm:WF}.
Since $G$ contains no pair of comparable vertices,  it follows that $Z=\emptyset$.

\begin{enumerate}
\setcounter{enumi}{10}

\emitem{For each $i$, $D_{i,i+1}$ is anti-complete to $D_{i+1,i+2}$, complete to $D_{i+2,i+3}$ and anti-complete to $D_{i+3,i+4}$.}
\label{itm:DiDj}

By symmetry, it suffices to prove the claim for $i=1$. 
Let $d\in D_{1,2}$. If $d$ is adjacent to $d'\in D_{2,3}$, then 46 and $dd'$ induce a $2P_2$.
 If $d$ is not adjacent to $d'\in D_{3,4}$, then $2d$ and $3d'$ induce a $2P_2$.  
 If $d$ is adjacent to $d'\in D_{4,5}$, then 36 and $dd'$ induce a $2P_2$. \e

\emitem {For each $i$, $F_{i,i+1}$ is anti-complete to $F_{i+1,i+2}\cup F_{i+3,i+4}$ and complete to $F_{i+2,i+3}$.}
\label{itm:FiFj}

By symmetry, it suffices to prove the claim for $i=1$. Let $f\in F_{1,2}$.
If $f$ is adjacent to a vertex $f'\in F_{2,3}$, then $\{1,3,f,f'\}$ induces a $K_4$.
If $f$ is not adjacent to a vertex $f'\in F_{3,4}$, then $5f'$ and $6f$ induce a $2P_2$.
If $f$ is adjacent to a vertex $f'\in F_{4,5}$, then $\{3,6,f,f'\}$ induces a $K_4$. \e

\emitem{The sets $T_i$ and $T_{i+1}$ are anti-complete for $i\in \{1,4\}$.}\label{itm:TiTi+1}

By symmetry, it suffices to prove this for $i=1$.
If $t_1\in T_1$ and $t_2\in T_2$ are adjacent, then $w$ is adjacent to both $t_1$ and $t_2$ by \ref{itm:WT}.
But now $\{t_1,t_2,w,1\}$ induces a $K_4$. \e

\emitem{The sets $T_3$ and $T_1\cup T_5$ are complete. By symmetry, $T_6$ and $T_2\cup T_4$ are complete.}\label{itm:TiTi+2}

Suppose that $t_3\in T_3$ is not adjacent to some vertex $t\in T_1\cup T_5$. 
By \ref{itm:WT}, $w$ is adjacent to $t$ but not to $t_3$. Then $3t_3$ and $wt$ induce a $2P_2$, a contradiction. \e

\emitem {The sets $T_i$ and $T_{i+3}$ are complete for each $1\le i\le 6$.}\label{itm:TiTi+3}

By symmetry, it suffices to prove this for $i=1$.
If $t_1\in T_1$ and $t_4\in T_4$ are not adjacent, then $2t_1$ and $3t_4$ induce a $2P_2$.\e

\emitem{For each $i$, $D_{i,i+1}$ is anti-complete to $T_{i-1}\cup T_i\cup T_{i+1}\cup T_{i+2}$ and 
complete to $T_{i+3}\cup T_{i+4}$.} \label{itm:DT}

We note that $D_{1,2}$ and $D_{4,5}$ are symmetric, and $D_{2,3}$, $D_{3,4}$, $D_{5,6}$ and $D_{6,1}$ are symmetric.
So, it suffices to prove the claim for $D_{1,2}$ and $D_{2,3}$.

Let $d\in D_{1,2}$. Suppose that $d$ is adjacent to some vertex $t\in T_6\cup T_1\cup T_2\cup T_3$.
By symmetry, we may assume that $i\in \{1,3\}$. If $i=1$, then $td$ and $35$ induce a $2P_2$.
If $i=3$, then $w$ is not adjacent to $d$ and $t$ by \ref{itm:WD} and \ref{itm:WT}.
Then $dt$ and $w5$ induce a $2P_2$. Now suppose that $d$ is not adjacent to some vertex $t\in T_4\cup T_5$.
By symmetry, we may assume that $t\in T_4$. Then $d2$ and $t3$ induce a $2P_2$.  This proves the claim
for $D_{1,2}$.

Let $d\in D_{2,3}$.  Suppose that $d$ is adjacent to some vertex $t\in  T_2\cup T_3$.
By symmetry, we may assume that $t\in T_2$. Then $dt$ and $46$ induce a $2P_2$.
Suppose that $d$ is not adjacent to some vertex $t\in  T_5\cup T_6$.
By symmetry, we may assume that $t\in T_5$. Then $d3$ and $t4$ induce a $2P_2$.

By \ref{itm:WD} and \ref{itm:WT}, $\{2,w\}$ is complete to $D_{2,3}\cup T_1$.
It follows from $K_4$-freeness of $G$ that $D_{2,3}$ is anti-complete to $T_1$.
It remains to show that $D_{2,3}$ is anti-complete to $T_4$.
Suppose that $d$ is adjacent to some vertex $t_4\in T_4$.
Note that $C'=C\setminus \{1\}\cup \{t_4\}$ induces a $\overline{C_6}$ and 
$H'=C'\cup \{w\}$ induces a subgraph isomorphic to $H_1$.
By \ref{itm:TiTi+1} and \ref{itm:TiTi+2}, all vertices in $T_1\cup T_4\cup T_5\cup T_6$ remain to be $T$-vertices with respect to $C'$.
Moreover, all vertices in $T_3\cup F$ remain to be $F$-vertices or $T$-vertices.
By the choice of $C$, there exists a vertex $t\in T_2$ that is not adjacent to $t_4$.
Then $dt_4$ and $1t_2$ induce a $2P_2$, a contradiction. This proves the claim for $D_{2,3}$.\e

\emitem {For each $i$, $F_{i,i+1}$ is anti-complete to $T_i\cup T_{i+1}$ and complete to $T_{i+3}\cup T_{i+4}$}
\label{itm:TF}

By symmetry of $C$, it suffices to prove this for $i=1$.
Let $f\in F_{1,2}$. If $f$ is adjacent to some vertex $t\in T_1\cup T_2$, then either $\{6,2,f,t\}$
or $\{1,3,f,t\}$ induces a $K_4$ depending on whether $t\in T_1$ or $t\in T_2$.
Suppose that $f$ is not adjacent to some vertex $t\in T_4\cup T_5$.
By symmetry, we may assume that $t\in T_4$. Then $6f$ and $5t$ induce a $2P_2$, a contradiction. \e

\emitem {The sets $F_{i,i+1}$ and $T_{i-1}$ are complete for $i\in \{2,5\}$,
and $F_{i,i+1}$ and $T_{i+2}$ are complete for $i\in \{3,6\}$. }
\label{itm:TFextra}

Let $f\in F_{i,i+1}$ and $t\in T_i$ be nonadjacent. 
By \ref{itm:WF} and \ref{itm:WT}, $w$ is adjacent to $t$ but not $f$.
It can be readily checked that in each of the cases $wt$ and $f3$ or  $wt$ and $f6$
induce a $2P_2$. \e 

\emitem {
The set $D_{1,2}$ is anti-complete to $F_{6,1}\cup F_{2,3}$ and complete to $F_{45}$.

The set $D_{4,5}$ is anti-complete to $F_{3,4}\cup F_{5,6}$ and complete to $F_{12}$.

The set $D_{2,3}$ is anti-complete to $F_{1,2}$ and complete to $F_{5,6}\cup F_{6,1}$.
 
The set $D_{3,4}$ is anti-complete to $F_{4,5}$ and complete to $F_{5,6}\cup F_{6,1}$.

The set $D_{6,1}$ is anti-complete to $F_{1,2}$ and complete to $F_{2,3}\cup F_{3,4}$.

The set $D_{5,6}$ is anti-complete to $F_{4,5}$ and complete to $F_{2,3}\cup F_{3,4}$.
}\label{itm:DF}

Note that $D_{1,2}$ and $D_{4,5}$ are symmetric, and $D_{2,3}$, $D_{3,4}$, $D_{5,6}$ and $D_{6,1}$ are symmetric.
So, it suffices to prove the claim for $D_{1,2}$ and $D_{2,3}$.
Let $d\in D_{1,2}$. If $d$ is adjacent to some vertex $f\in F_{6,1}\cup F_{2,3}$,
then $w$ is not adjacent to $d$ and $f$ by \ref{itm:WD} and \ref{itm:WF}.
Now $df$ and $w4$ or $df$and $w5$ induce a $2P_2$ depending on whether $f\in F_{6,1}$ or $f\in F_{2,3}$.
If $d$ is not adjacent to some vertex $f\in F_{4,5}$, then $d2$ and $f3$ induce a $2P_2$.
This proves the claim for $D_{1,2}$.

Now let $d\in D_{2,3}$. By \ref{itm:WD}, it follows that $wd\in E$.
If $d$ is adjacent to a vertex $f\in F_{1,2}$, then $\{d,f,2,w\}$ induces a $K_4$ by \ref{itm:WF}.
If $d$ is not adjacent to a vertex $f\in F_{5,6}\cup F_{6,1}$, then $6f$ and $wd$ induce a $2P_2$ by \ref{itm:WF}.
This proves the claim for $D_{2,3}$. \e

\end{enumerate}

We proceed with a few claims that help to show that certain sets are empty.
\begin{claim}\label{cla:D12D45}
Either $D_{1,2}$ or $D_{4,5}$ is empty.
\end{claim}

\begin{proof}[{\bf Proof of \autoref{cla:D12D45}}]
Suppose not. Let $d_{12}\in D_{1,2}$ and $d_{45}\in D_{4,5}$.
By \ref{itm:WD}-\ref{itm:DF}, $N(d_{12})\subseteq N(w)$ unless
$d_{12}$ has a neighbor $f\in F_{3,4}\cup F_{5,6}$.
Similarly, $N(d_{45})\subseteq N(w)$ unless
$d_{45}$ has a neighbor $f'\in F_{3,4}\cup F_{5,6}$.
By \ref{itm:DiDj} and \ref{itm:DF}, $d_{12}f$ and $d_{45}f'$ induce a $2P_2$, a contradiction.
\end{proof}

\begin{claim}\label{cla:T1T2T4T5}
Each vertex in $T_1$ has a non-neighbor in $T_5$ and each vertex in $T_5$ has a non-neighbor in $T_1$.
By symmetry, each vertex in $T_2$ has a non-neighbor in $T_4$ and each vertex in $T_4$ has a non-neighbor in $T_2$.
\end{claim}

\begin{proof}[{\bf Proof of \autoref{cla:T1T2T4T5}}]
Let $t_1\in T_1$. Let 
\[X=\{6,1,2\}\cup W\cup D_{3,4}\cup D_{4,5}\cup T_3\cup T_4\cup F_{2,3}\cup F_{3,4}\cup F_{4,5}.\]
Note that $N(4)=X\cup T_5\cup F_{5,6}$ and $N(t_1)\subseteq X\cup T_5\cup F_{5,6}\cup T_6$ by the properties
we have proved. Since $G$ contains no pair of comparable vertices, $t_1$ has a neighbor $t_6\in T_6$
and there exists a vertex $t\in N(4)\setminus N(t_1)$. Clearly, $t\in F_{5,6}\cup T_5$.
If $t\in F_{5,6}$, then $4t$ and $t_1t_6$ induce a $2P_2$ since $F_{56}$ and $T_6$ are anti-complete by \ref{itm:TF}.
This shows that $t_1$ has a non-neighbor $t\in T_5$. By symmetry, each vertex in $T_5$ has a non-neighbor in $T_1$.
\end{proof}

\begin{claim}\label{cla:T3T6}
Each vertex in $T_6$ has a neighbor in $T_1\cup T_5$.
By symmetry, each vertex in $T_3$ has a neighbor in $T_2\cup T_4$.
\end{claim}

\begin{proof}[{\bf Proof of \autoref{cla:T3T6}}]
Let $t_6\in T_6$. Let
\[X=\{5,6,1\}\cup D_{2,3}\cup D_{3,4}\cup T_2\cup T_3\cup T_4\cup F_{2,3}\cup F_{3,4}.\]
Note that $N(3)=X\cup F_{1,2}\cup F_{4,5}$ and $N(t_6)\subseteq X\cup T_1\cup T_5\cup F_{12}\cup F_{45}$.
Since $G$ contains no pair of comparable vertices, $t_6$ has a neighbor in $T_1\cup T_5$.
\end{proof}

\begin{claim}\label{cla:D5661T2T4complete}
If $D_{5,6}\cup D_{6,1}\neq \emptyset$, then $T_2$ and $T_4$ are complete.
By symmetry, if $D_{2,3}\cup D_{3,4}\neq \emptyset$, then $T_1$ and $T_5$ are complete.
\end{claim}

\begin{proof}[{\bf Proof of \autoref{cla:D5661T2T4complete}}]
Let $d\in D_{5,6}\cup D_{6,1}$. Suppose that $t_2\in T_2$ and $t_4\in T_4$ are not adjacent.
If $d\in D_{5,6}$, then $dt_2\in E$ and $dt_4\notin E$ by \ref{itm:DT}. Thus, $dt_2$ and $4t_4$
induce a $2P_2$. If $d\in D_{6,1}$, then $dt_4\in E$ and $dt_2\notin E$ by \ref{itm:DT}. Thus, $dt_4$ and $2t_2$
induce a $2P_2$. 
\end{proof}

\begin{claim}\label{cla:F61F12F23}
One of $F_{6,1}$, $F_{1,2}$ and $F_{2,3}$ is empty.
By symmetry, one of $F_{3,4}$, $F_{4,5}$ and $F_{5,6}$ is empty.
\end{claim}

\begin{proof}[{\bf Proof of \autoref{cla:F61F12F23}}]
Suppose that $f_{61}\in F_{6,1}$, $f_{12}\in F_{1,2}$, and $f_{23}\in F_{2,3}$.
Then $f_{61}f_{23}$ and $f_{12}w$ induce a $2P_2$ by \ref{itm:WF} and \ref{itm:FiFj}.  
\end{proof}

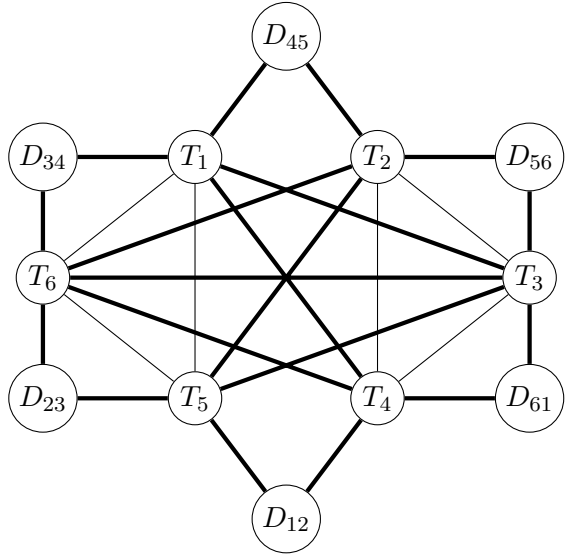
\begin{figure}[t]
\centering

\begin{tikzpicture}[scale=0.8]
\tikzstyle{vertex}=[draw, circle, fill=white!100, minimum width=4pt,inner sep=2pt]

\node[vertex] (v1) at (-1.5,2) {$T_1$};
\node[vertex] (v2) at (1.5,2) {$T_2$};
\node[vertex] (v3) at (4,0) {$T_3$};
\node[vertex] (v4) at (1.5,-2) {$T_4$};
\node[vertex] (v5) at (-1.5,-2) {$T_5$};
\node[vertex] (v6) at (-4,0) {$T_6$};

\draw (v2)--(v3)--(v4)--(v2);
\draw (v1)--(v6)--(v5)--(v1);
\draw[ultra thick] (v1)--(v4) (v2)--(v5) (v3)--(v6);
\draw[ultra thick] (v3)--(v1) (v3)--(v5) (v6)--(v2) (v6)--(v4);

\node[vertex] (d45) at (0,4) {$D_{45}$};
\draw[ultra thick] (d45)--(v1) (d45)--(v2);
\node[vertex] (d12) at (0,-4) {$D_{12}$};
\draw[ultra thick] (d12)--(v4) (d12)--(v5);
\node[vertex] (d23) at (-4,-2) {$D_{23}$};
\draw[ultra thick] (d23)--(v5) (d23)--(v6);
\node[vertex] (d34) at (-4,2) {$D_{34}$};
\draw[ultra thick] (d34)--(v6) (d34)--(v1);
\node[vertex] (d56) at (4,2) {$D_{56}$};
\draw[ultra thick] (d56)--(v2) (d56)--(v3);
\node[vertex] (d61) at (4,-2) {$D_{61}$};
\draw[ultra thick] (d61)--(v3) (d61)--(v4);

\end{tikzpicture}

\caption{The adjacency among $T_i$ and $D_{i,i+1}$.  A thick line between two sets means that the two sets are complete,
a thin line means the edges between the two sets can be arbitrary, and no line means that the two sets are anti-complete.
For clarity, edges between two $D_{i,i+1}$ are not shown.}\label{fig:DT}

\end{figure}

By \autoref{cla:D12D45}, we may assume that $D_{4,5}=\emptyset$.
It follows from \ref{itm:TiTi+1}, \ref{itm:TiTi+2} and \ref{itm:TiTi+3} that
either $T_1$ and $T_5$ are complete or $T_2$ and $T_4$ are complete for otherwise $G$ would contain a $2P_2$ (see \autoref{fig:DT}).
By symmetry, we may assume that $T_1$ and $T_5$ are complete.
It then follows from \autoref{cla:T1T2T4T5} and \autoref{cla:T3T6} that
$T_1\cup T_5\cup T_6=\emptyset$.

If $D_{5,6}\cup D_{6,1}\neq \emptyset$, then $T_2\cup T_3\cup T_4=\emptyset$ 
due to \autoref{cla:T1T2T4T5}-\autoref{cla:D5661T2T4complete}.
In the following we shall use this fact without explicitly mentioning it. 
We divide our proof into four cases depending on whether $F_{1,2}$ and $F_{4,5}$ are empty or not.
One can verify that each of the partitions of $V(G)$ into 4 subsets in the following is a 4-coloring of $G$
using the properties we have proved. For convenience, we draw \autoref{fig:DF} to visulize the adjacency among $D_{i,i+1}$ and $F_{i,i+1}$.
From \autoref{fig:DF} it can be seen that if $T_2\cup T_3\cup T_4=\emptyset$, then we can use the symmetry of $H$ under its
automorphism $f:V(H)\rightarrow V(H)$ with $f(1)=2$, $f(2)=1$, $f(3)=6$, $f(4)=5$, $f(5)=4$, $f(6)=3$ and $f(w)=w$.

\begin{figure}[t]
\centering

\begin{tikzpicture}[scale=0.8]
\tikzstyle{vertex}=[draw, circle, fill=white!100, minimum width=4pt,inner sep=2pt]

\node[vertex] (f12) at (0,4) {$F_{12}$};
\node[vertex] (f23) at (4,2) {$F_{23}$};
\node[vertex] (f34) at (4,-2) {$F_{34}$};
\node[vertex] (f45) at (0,-4) {$F_{45}$};
\node[vertex] (f56) at (-4,-2) {$F_{56}$};
\node[vertex] (f61) at (-4,2) {$F_{61}$};

\draw[ultra thick] (f12)--(f34)--(f56)--(f12);
\draw[ultra thick] (f23)--(f45)--(f61)--(f23);

\node[vertex] (d12) at  (0,0) {$D_{12}$};
\draw[ultra thick] (d12)--(f45);
\draw (f34)--(d12)--(f56) (d12)--(f12);

\node[vertex] (d23) at  (-6,-5) {$D_{23}$};
\draw[ultra thick] (f56)--(d23)--(f61);
\draw (d23)--(f23)  (d23)--(f34); 
\draw[anchor=south] (d23)--(f45);

\node[vertex] (d34) at  (-6,5) {$D_{34}$};
\draw[ultra thick] (f56)--(d34)--(f61);
\draw (d34)--(f23)  (d34)--(f34); 
\draw[anchor=south] (d34)--(f12);

\node[vertex] (d56) at  (6,5) {$D_{56}$};
\draw[ultra thick] (f23)--(d56)--(f34);
\draw (d56)--(f56)  (d56)--(f61); 
\draw[anchor=south] (d56)--(f12);

\node[vertex] (d61) at  (6,-5) {$D_{61}$};
\draw[ultra thick] (f23)--(d61)--(f34);
\draw (d61)--(f56)  (d61)--(f61); 
\draw[anchor=south] (d61)--(f45);
\end{tikzpicture}

\caption{The adjacency among $F_{i,i+1}$ and $D_{i,i+1}$.  A thick line between two sets means that the two sets are complete,
a thin line means the edges between the two sets can be arbitrary, and no line means that the two sets are anti-complete.
For clarity, edges between two $D_{i,i+1}$ are not shown.}\label{fig:DF}

\end{figure}
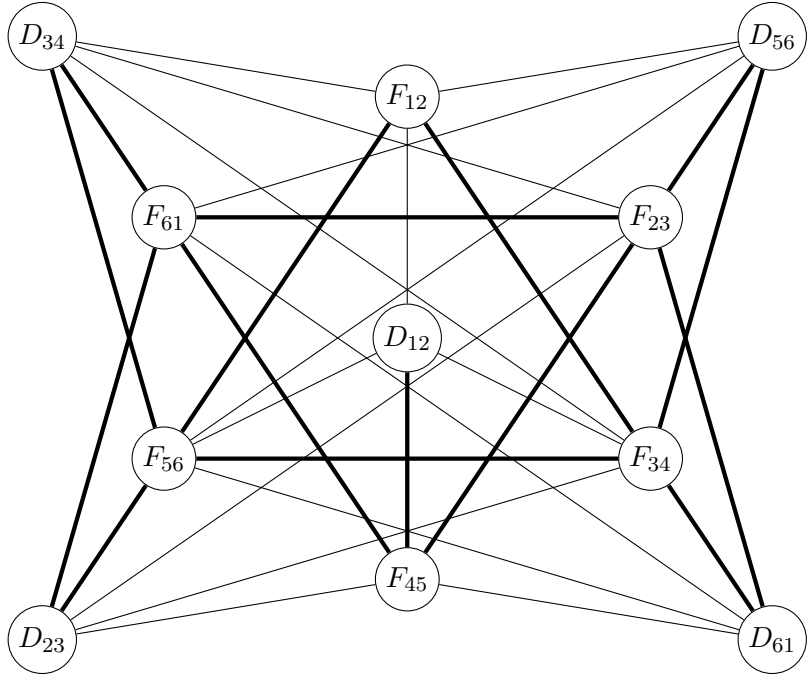

\medskip

\case{1} Both $F_{1,2}$ and $F_{4,5}$ are not empty. 
Let $f_{12}\in D_{1,2}$ and $f_{45}\in D_{4,5}$.
We first show that $F_{1,2}\cup F_{4,5}$ is anti-complete to $D_{2,3}\cup D_{3,4}\cup D_{5,6}\cup D_{6,1}$.
By symmetry, it suffices to show that $F_{1,2}\cup F_{4,5}$ is anti-complete to $D_{2,3}$.
Suppose that $d\in D_{2,3}$ and $f\in F_{1,2}\cup F_{4,5}$ are adjacent. By \ref{itm:DF}, $f\in F_{4,5}$.
Then $df$ and $1f_{12}$ induce a $2P_2$.
On the other hand,  it follows from \autoref{cla:F61F12F23} and \ref{itm:FiFj} that
at most one of $F_{2,3}$, $F_{3,4}$, $F_{5,6}$ and $F_{6,1}$ is not empty.

$\bullet$ If $F_{2,3}\neq \emptyset$, then $G$ has a 4-coloring:
\begin{align*}
& F_{4,5}\cup D_{2,3}\cup D_{3,4} \cup \{1\}\cup T_4,\\
& F_{2,3}\cup D_{1,2}\cup W\cup \{6\}\cup T_3,	\\
& F_{1,2}\cup \{4,5\}\cup T_2,		\\
& D_{5,6}\cup D_{6,1}\cup \{2,3\}. 	\\	
\end{align*}

$\bullet$  Suppose that $F_{6,1}\neq \emptyset$. 

If $D_{5,6}\cup D_{6,1}\neq \emptyset$, then $G$ has a 4-coloring:
\begin{align*}
& F_{4,5}\cup D_{5,6}\cup D_{6,1} \cup \{2\},\\
& F_{6,1}\cup D_{1,2}\cup W\cup \{3\},	\\
& F_{1,2}\cup \{4,5\},		\\
& D_{2,3}\cup D_{3,4}\cup \{1,6\}. 	\\	
\end{align*}

If $D_{5,6}\cup D_{6,1}= \emptyset$, then $G$ has a 4-coloring:
\begin{align*}
& F_{4,5}\cup \{1,2\}\cup T_4,\\
& F_{6,1}\cup D_{1,2}\cup W\cup \{3\},	\\
& F_{1,2}\cup \{4,5\}\cup T_2,		\\
& D_{2,3}\cup D_{3,4}\cup \{6\}\cup T_3. 	\\	
\end{align*}

$\bullet$  Suppose that $F_{3,4}\neq \emptyset$. Note first that no vertex $d\in D_{1,2}$ can have a neighbor in
both $F_{1,2}$ and $F_{3,4}$ for otherwise a neighbor of $d$ in $F_{1,2}$, a neighbor of $d$ in $F_{3,4}$,
$d$ and $2$ induce a $K_4$. Let $D'_{1,2}$ be the set of vertices in $D_{1,2}$ that are anti-complete to $F_{3,4}$
and $D''_{1,2}=D_{1,2}\setminus D'_{1,2}$. Then $G$ has a 4-coloring:
\begin{align*}
&F_{4,5}\cup D_{2,3}\cup D_{3,4} \cup \{1\}\cup T_4,\\
&F_{3,4}\cup D'_{1,2}\cup W\cup \{6\}\cup T_3,	\\
&F_{1,2}\cup D''_{1,2}\cup \{4,5\}\cup T_2,		\\
&D_{5,6}\cup D_{6,1}\cup \{2,3\}. 	\\	
\end{align*}

$\bullet$  Suppose that $F_{5,6}\neq \emptyset$.  Note first that no vertex $d\in D_{1,2}$ can have a neighbor in
both $F_{1,2}$ and $F_{5,6}$ for otherwise a neighbor of $d$ in $F_{1,2}$, a neighbor of $d$ in $F_{5,6}$,
$d$ and 1 induce a $K_4$. Let $D'_{1,2}$ be the set of vertices in $D_{1,2}$ that are anti-complete to $F_{5,6}$
and $D''_{1,2}=D_{1,2}\setminus D'_{1,2}$. By \ref{itm:TF} and \ref{itm:TFextra}, $F_{5,6}$ and $T_3\cup T_4$ are complete.
Since $G$ is $K_4$-free, $T_3$ and $T_4$ are anti-complete.
Then $G$ has a 4-coloring:
\begin{align*}
&F_{4,5}\cup D_{5,6}\cup D_{6,1} \cup \{2\},\\
&F_{5,6}\cup D'_{1,2}\cup W\cup \{3\},	\\
&F_{1,2}\cup D''_{1,2}\cup \{4,5\}\cup T_2,		\\
&D_{2,3}\cup D_{3,4}\cup \{1,6\}\cup T_3\cup T_4. 	\\	
\end{align*}

\case{2} Both $F_{1,2}$ and $F_{4,5}$ are empty.
By \ref{itm:FiFj} and the fact that $G$ is $2P_2$-free, one of $F_{2,3}$, $F_{3,4}$, $F_{5,6}$ and $F_{6,1}$ is empty.
 By \ref{itm:DiDj}, \ref{itm:DF}, \ref{itm:FiFj} and $K_4$-freeness of $G$,
either $D_{5,6}$ and $F_{5,6}$ are anti-complete or $D_{3,4}$ and $F_{3,4}$ are anti-complete.

$\bullet$ Suppose that $F_{6,1}=\emptyset$.

If $D_{5,6}$ and $F_{5,6}$ are anti-complete, then $G$ has a 4-coloring:
\begin{align*}
&F_{2,3}\cup F_{3,4}\cup W\cup \{6\}\cup T_3,\\
&F_{5,6}\cup D_{5,6}\cup \{2,3\},	\\
&D_{1,2}\cup D_{6,1}\cup \{4,5\}\cup T_2,		\\
&D_{2,3}\cup D_{3,4}\cup \{1\}\cup T_4. 	\\	
\end{align*}

Now assume that $D_{3,4}$ and $F_{3,4}$ are anti-complete.

If $D_{5,6}\cup D_{6,1}\neq \emptyset$, then $G$ has a 4-coloring:
\begin{align*}
&F_{2,3}\cup F_{5,6}\cup W,\\
&F_{3,4}\cup D_{3,4}\cup \{6,1\},	\\
&D_{1,2}\cup D_{2,3}\cup \{4,5\},		\\
&D_{5,6}\cup D_{6,1}\cup \{2,3\}. 	\\	
\end{align*}

If $D_{5,6}\cup D_{6,1}= \emptyset$, then $G$ has a 4-coloring:
\begin{align*}
&F_{2,3}\cup D_{1,2}\cup W\cup \{6\}\cup T_3,\\
&F_{3,4}\cup D_{3,4}\cup \{1\}\cup T_4,	\\
&F_{5,6}\cup \{2,3\},		\\
&D_{2,3}\cup \{4,5\}\cup T_2. 	\\	
\end{align*}

$\bullet$ Suppose that $F_{2,3}=\emptyset$. 

Suppose first that $D_{3,4}$ and $F_{3,4}$ are anti-complete.

If $D_{5,6}\cup D_{6,1}\neq \emptyset$, then $G$ has a 4-coloring: 
\begin{align*}
&F_{6,1}\cup F_{5,6}\cup W\cup \{3\},\\
&F_{3,4}\cup D_{3,4}\cup \{6,1\},	\\
&D_{1,2}\cup D_{2,3}\cup \{4,5\},		\\
&D_{6,1}\cup D_{5,6}\cup \{2\}. 	\\	
\end{align*}

If $D_{5,6}\cup D_{6,1}= \emptyset$, then $G$ has a 4-coloring:
\begin{align*}
&F_{6,1}\cup F_{5,6}\cup W\cup \{3\},\\
&F_{3,4}\cup D_{3,4}\cup \{6\}\cup T_3,	\\
&D_{1,2}\cup D_{2,3}\cup \{4,5\}\cup T_2,		\\
&\{1,2\}\cup T_4. 	\\	
\end{align*}

Suppose now that $D_{3,4}$ and $F_{3,4}$ are not anti-complete and that $D_{5,6}$ and $F_{5,6}$ are anti-complete.
By \ref{itm:DT} and \ref{itm:TF}, $D_{3,4}\cup F_{3,4}$ are anti-complete to $T_3\cup T_4$.
Since $G$ is $2P_2$-free, it follows that $T_3$ and $T_4$ are anti-complete.
Then $G$ has a 4-coloring:
\begin{align*}
&F_{6,1}\cup F_{3,4}\cup W,\\
&F_{5,6}\cup D_{5,6}\cup \{2,3\},	\\
&D_{1,2}\cup D_{6,1}\cup \{4,5\}\cup T_2,		\\
&D_{2,3}\cup D_{3,4}\cup \{6,1\}\cup T_3\cup T_4. 	\\	
\end{align*}

$\bullet$ Suppose that $F_{5,6}=\emptyset$. 
If $F_{6,1}=\emptyset$, then $G$ has a 4-coloring as above.
So, we can assume that $F_{6,1}\neq \emptyset$. Let $f_{61}\in F_{6,1}$. 
If $d\in D_{2,3}$ and $f\in F_{2,3}$ are adjacent, then $\{2,f_{61},d,f\}$ induces a $K_4$ by \ref{itm:FiFj} and \ref{itm:DF}.
So, $D_{2,3}$ and $F_{2,3}$ are anti-complete.  
By \ref{itm:TF} and \ref{itm:TFextra}, $F_{6,1}$ and $T_2\cup T_3$ are complete.
Since $G$ is $K_4$-free, $T_2$ and $T_3$ are anti-complete.
Then $G$ has a 4-coloring:
\begin{align*}
&F_{3,4}\cup F_{6,1}\cup W,\\
&F_{2,3}\cup D_{1,2}\cup  D_{2,3}\cup \{5,6\}\cup T_2\cup T_3,	\\
&D_{3,4}\cup \{1,2\}\cup T_4,		\\
&D_{5,6}\cup D_{6,1}\cup \{3,4\}. 	\\	
\end{align*}

$\bullet$ Suppose that $F_{3,4}=\emptyset$. If $F_{2,3}=\emptyset$, then $G$ has a  4-coloring as above.
So, we can assume that $F_{2,3}\neq \emptyset$. 
Let $f_{23}\in F_{2,3}$. 
If $d\in D_{6,1}$ and $f\in F_{6,1}$ are adjacent, then $\{1,f_{23},d,f\}$ induces a $K_4$ by \ref{itm:FiFj} and \ref{itm:DF}.
So, $D_{6,1}$ and $F_{6,1}$ are anti-complete. 

If $D_{5,6}\cup D_{6,1}\neq \emptyset$, then $G$ has a 4-coloring:
\begin{align*}
&F_{5,6}\cup F_{2,3}\cup W,\\
&F_{6,1}\cup D_{1,2}\cup D_{6,1}\cup \{3,4\},	\\
&D_{5,6}\cup \{1,2\},		\\
&D_{3,4}\cup D_{2,3}\cup \{5,6\}. 	\\	
\end{align*}

If $D_{5,6}\cup D_{6,1}\neq \emptyset$, then $G$ has a 4-coloring:
\begin{align*}
&F_{5,6}\cup F_{6,1}\cup W\cup \{3\},\\
&F_{2,3}\cup D_{1,2}\cup \{6\}\cup T_3,	\\
&D_{2,3}\cup \{4,5\}\cup T_2,		\\
&D_{3,4}\cup \{1,2\}\cup T_4. 	\\	
\end{align*}

\case{3} The set $F_{1,2}=\emptyset$ but the set $F_{4,5}\neq \emptyset$.
By \autoref{cla:F61F12F23}, either $F_{3,4}=\emptyset$ or $F_{5,6}=\emptyset$.
 By \ref{itm:DiDj}, \ref{itm:DF}, \ref{itm:FiFj} and $K_4$-freeness of $G$,
either $D_{2,3}$ and $F_{2,3}$ are anti-complete or $D_{6,1}$ and $F_{6,1}$ are anti-complete.

$\bullet$ Suppose that $F_{5,6}=\emptyset$.

If $D_{6,1}$ and $F_{6,1}$ are anti-complete, then $G$ has a 4-coloring:
\begin{align*}
&F_{2,3}\cup F_{3,4}\cup W\cup \{6\}\cup T_3,\\
&F_{6,1}\cup D_{1,2}\cup  D_{6,1}\cup \{3,4\},\\
&F_{4,5}\cup D_{5,6}\cup \{1,2\}\cup T_4,		\\
&D_{2,3}\cup D_{3,4}\cup \{5\}\cup T_2. 	\\	
\end{align*}

Now assume that $D_{2,3}$ and $F_{2,3}$ are anti-complete.

If $D_{5,6}\cup D_{6,1}\neq \emptyset$, then $G$ has a 4-coloring:
\begin{align*}
&F_{3,4}\cup F_{6,1}\cup W,\\
&F_{2,3}\cup D_{1,2}\cup  D_{2,3}\cup \{5,6\},	\\
&F_{4,5}\cup D_{3,4}\cup \{1,2\},		\\
&D_{5,6}\cup D_{6,1}\cup \{3,4\}. 	\\	
\end{align*}

If $D_{5,6}\cup D_{6,1}= \emptyset$, then $G$ has a 4-coloring:
\begin{align*}
&F_{3,4}\cup W\cup \{6\}\cup T_3,\\
&F_{2,3}\cup D_{1,2}\cup  D_{2,3}\cup \{5\}\cup T_2,	\\
&F_{4,5}\cup D_{3,4}\cup \{1,2\}\cup T_4,		\\
&F_{6,1}\cup \{3,4\}. 	\\	
\end{align*}

$\bullet$ Suppose that $F_{3,4}=\emptyset$. 
Suppose first that $D_{2,3}$ and $F_{2,3}$ are anti-complete.

If $D_{5,6}\cup D_{6,1}\neq \emptyset$, then $G$ has a 4-coloring:
\begin{align*}
&F_{5,6}\cup F_{6,1}\cup W\cup \{3\},\\
&F_{2,3}\cup D_{1,2}\cup  D_{2,3}\cup \{5,6\},	\\
&F_{4,5}\cup D_{3,4}\cup \{1,2\},		\\
&D_{5,6}\cup D_{6,1}\cup \{4\}. 	\\	
\end{align*}

If $D_{5,6}\cup D_{6,1}= \emptyset$, then  $G$ has  a 4-coloring:
\begin{align*}
&F_{5,6}\cup F_{6,1}\cup W\cup \{3\},\\
&F_{2,3}\cup D_{1,2}\cup  D_{2,3}\cup \{6\}\cup T_3,	\\
&F_{4,5}\cup D_{3,4}\cup \{1,2\}\cup T_4,		\\
&\{4,5\}\cup T_2. 	\\	
\end{align*}

Now suppose that $D_{2,3}$ and $F_{2,3}$ are not anti-complete and that $D_{6,1}$ and $F_{6,1}$ are anti-complete.
Then $T_2$ and $T_3$ are anti-complete for otherwise an edge between $T_2$ and $T_3$
and an edge between $D_{2,3}$ and $F_{2,3}$ induce a $2P_2$ by \ref{itm:DT} and \ref{itm:TF}.
Then $G$ has a 4-coloring:
\begin{align*}
&F_{5,6}\cup F_{2,3}\cup W,\\
&F_{6,1}\cup D_{1,2}\cup  D_{6,1}\cup \{3,4\},	\\
&F_{4,5}\cup D_{5,6}\cup \{1,2\}\cup T_4,		\\
&D_{2,3}\cup D_{3,4}\cup \{5,6\}\cup T_2\cup T_3. 	\\	
\end{align*}

\case{4} The set $F_{4,5}=\emptyset$ but the set $F_{1,2}\neq \emptyset$.
By \autoref{cla:F61F12F23}, either $F_{2,3}=\emptyset$ or $F_{6,1}=\emptyset$.
By \ref{itm:DF} and \ref{itm:FiFj}, $F_{3,4}$ is complete to $D_{5,6}\cup F_{5,6}$.
So, if $F_{3,4}\neq \emptyset$, then $D_{5,6}$ and $F_{5,6}$ are anti-complete for otherwise
$G$ would contain a $K_4$. By symmetry, if $F_{5,6}\neq \emptyset$, then $D_{3,4}$ and $F_{3,4}$ are anti-complete.
Moreover, either $D_{3,4}$ and $F_{3,4}$ are anti-complete or $D_{5,6}$ and $F_{5,6}$ are anti-complete.
Similarly,  either $D_{2,3}$ and $F_{3,4}$ are anti-complete or $D_{6,1}$ and $F_{5,6}$ are anti-complete.

$\bullet$ Suppose that $F_{6,1}=\emptyset$.
If both $F_{3,4}$ and $F_{5,6}$ are not empty, then consider the following 4-coloring of $G-(D_{2,3}\cup D_{6,1})$:
\begin{align*}
I_1&=F_{2,3}\cup D_{1,2}\cup W\cup \{6\}\cup T_3,\\
I_2&=F_{3,4}\cup D_{3,4}\cup \{1\}\cup T_4,	\\
I_3&=F_{5,6}\cup D_{5,6}\cup \{2,3\},		\\
I_4&=F_{1,2}\cup \{4,5\}\cup T_2. 	\\	
\end{align*}
If $D_{2,3}$ and $F_{3,4}$ are anti-complete, then $G$ has a 4-coloring: $I_1$, $I_2\cup D_{2,3}$, $I_3$ and $I_4\cup D_{6,1}$.
If $D_{6,1}$ and $F_{5,6}$ are anti-complete, then $G$ has a 4-coloring: $I_1$, $I_2$, $I_3\cup D_{6,1}$ and $I_4\cup D_{2,3}$.
It reamains to consider the case where at least one of $F_{3,4}$ and $F_{5,6}$ is empty.

Suppose that $F_{5,6}=\emptyset$. Recall that no vertex in $D_{1,2}$ can have a neighbor in both $F_{1,2}$ and $F_{3,4}$.
Let $D'_{1,2}$ be the set of vertices in $D_{1,2}$ that are anti-complete to $F_{1,2}$ and $D''_{1,2}=D_{1,2}\setminus D'_{1,2}$.
Then $G$ has a 4-coloring:
\begin{align*}
&F_{1,2}\cup D'_{1,2}\cup \{4,5\}\cup T_2,\\
&F_{2,3}\cup F_{3,4}\cup D''_{1,2}\cup W\cup \{6\}\cup T_3,	\\
&D_{2,3}\cup D_{3,4}\cup \{1\}\cup T_4,		\\
&D_{5,6}\cup  D_{6,1}\cup\{2,3\}. 	\\	
\end{align*}

Suppose now that $F_{5,6}\neq \emptyset$ and $F_{3,4}=\emptyset$. 
Note that no vertex in $D_{1,2}$ can have a neighbor in both $F_{1,2}$ and $F_{5,6}$. 
Let $D'_{1,2}$ be the set of vertices in $D_{1,2}$ that are anti-complete to $F_{1,2}$ 
and $D''_{1,2}=D_{1,2}\setminus D'_{1,2}$. Moreover, recall that since $F_{5,6}\neq \emptyset$,
$T_3$ and $T_4$ are anti-complete. Then $G$ has a 4-coloring:
\begin{align*}
&F_{1,2}\cup D_{2,3}\cup D'_{1,2}\cup \{4,5\}\cup T_2,\\
&F_{2,3}\cup F_{5,6}\cup D''_{1,2}\cup W,\\
&D_{3,4}\cup \{6,1\}\cup T_3\cup T_4, 	\\
&D_{5,6}\cup D_{6,1}\cup \{2,3\}.\\	
\end{align*}

$\bullet$ Suppose that $F_{2,3}=\emptyset$.
If both $F_{3,4}$ and $F_{5,6}$ are not empty,
then consider the following 4-coloring of $G-(D_{2,3}\cup D_{6,1})$:
\begin{align*}
I_1&=F_{6,1}\cup D_{1,2}\cup W\cup \{3\},\\
I_2&=F_{5,6}\cup D_{5,6}\cup \{2\},	\\
I_3&=F_{3,4}\cup D_{3,4}\cup \{6,1\}\cup T_3\cup T_4,		\\
I_4&=F_{1,2}\cup \{4,5\}\cup T_2. 	\\	
\end{align*}
If $D_{2,3}$ and $F_{3,4}$ are anti-complete, then $G$ has a 4-coloring: $I_1$, $I_2$, $I_3\cup D_{2,3}$ and $I_4\cup D_{6,1}$.
If $D_{6,1}$ and $F_{5,6}$ are anti-complete, then $G$ has a 4-coloring: $I_1$, $I_2\cup D_{6,1}$, $I_3$ and $I_4\cup D_{2,3}$.
So, one of $F_{3,4}$ and $F_{5,6}$ is empty.

Suppose that $F_{5,6}\neq \emptyset$. So, $F_{3,4}=\emptyset$. Recall that no vertex in $D_{1,2}$ can have a neighbor in both $F_{1,2}$ and $F_{5,6}$.
Let $D'_{1,2}$ be the set of vertices in $D_{1,2}$ that are anti-complete to $F_{1,2}$ 
and $D''_{1,2}=D_{1,2}\setminus D'_{1,2}$. Moreover,  $T_3$ and $T_4$ are anti-complete.
Then $G$ has a 4-coloring:
\begin{align*}
&F_{1,2}\cup D'_{1,2}\cup \{4,5\}\cup T_2,\\
&F_{6,1}\cup F_{5,6}\cup D''_{1,2}\cup W\cup \{3\},\\
&D_{6,1}\cup D_{5,6}\cup\{2\}, 	\\
&D_{2,3}\cup D_{3,4}\cup \{6,1\}\cup T_3\cup T_4.\\	
\end{align*}

Suppose now that $F_{5,6}=\emptyset$. 
Recall that no vertex in $D_{1,2}$ can have a neighbor in both $F_{1,2}$ and $F_{3,4}$.
Let $D'_{1,2}$ be the set of vertices in $D_{1,2}$ that are anti-complete to $F_{1,2}$ 
and $D''_{1,2}=D_{1,2}\setminus D'_{1,2}$. 

If $D_{5,6}\cup D_{6,1}\neq \emptyset$, then $G$ has a 4-coloring:
\begin{align*}
&F_{1,2}\cup D_{6,1}\cup D'_{1,2}\cup \{4,5\},\\
&F_{6,1}\cup F_{3,4}\cup D''_{1,2}\cup W,\\
&D_{5,6}\cup\{2,3\}, 	\\
&D_{2,3}\cup D_{3,4}\cup \{6,1\}.\\	
\end{align*}

If $D_{5,6}\cup D_{6,1}= \emptyset$, then $G$ has a 4-coloring:
\begin{align*}
&F_{1,2}\cup D_{2,3}\cup D'_{1,2}\cup \{4,5\}\cup T_2,\\
&F_{3,4}\cup D''_{1,2}\cup W\cup \{6\}\cup T_3,\\
&F_{5,6}\cup \{3\}, 	\\
&D_{3,4}\cup \{1,2\}\cup T_4.\\	
\end{align*}

In each case we have found a 4-coloring of $G$. This completes our proof.
\end{proof}

\section{Eliminate $H_2$}\label{sec:F2}

In this section we show that our main theorem, \autoref{thm:main}, holds when $G$ is connected, has no pair of comparable vertices, does not contain $H_1$ as an induced subgraph, but contains $H_2$ as an induced subgraph.

\begin{lemma}\label{lem:F2}
Let $G$ be a connected $(2P_2,K_4, H_1)$-free graph with no pair of comparable vertices.
If $G$ contains an induced $H_2$, then $\chi(G)\le 4$.
\end{lemma}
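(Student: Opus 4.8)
The plan is to run the structural machinery of \autoref{sec:pre} on the $C_5$ hidden inside $H_2$. An induced $H_2$ is precisely an induced $C_5$, say $C=12345$, together with a vertex $t$ whose neighbourhood on $C$ is $\{2,3,4,5\}$ (see \autoref{fig:F1F2}); in the notation of \autoref{sec:pre} this means $t\in F_1$, so $F_1\neq\emptyset$. By \autoref{lem:partition} we may write $V(G)=C\cup Z\cup(\bigcup_{i}R_i)\cup(\bigcup_{i}Y_i)\cup(\bigcup_{i}F_i)\cup U$, and all of the adjacency properties \ref{itm:ZR}--\ref{itm:RiYi+1Yi+2} are at our disposal. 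Since $G$ is in addition $H_1$-free, property \ref{itm:Ri+1YiYi+2} applies with $i=1$, giving that $R_2$ is anti-complete to $Y_1\cup Y_3$ and $R_5$ is anti-complete to $Y_1\cup Y_4$. Moreover \ref{itm:FiFi+2} forces $F_3=F_4=\emptyset$ and shows that $F_2$ and $F_5$ cannot both be nonempty; using the automorphism of $C$ that fixes $1$ and swaps $2\leftrightarrow 5$, $3\leftrightarrow 4$ (which maps $F_1$ to $F_1$ and exchanges $F_2$ with $F_5$), we may assume $F_5=\emptyset$. Thus the only $F$-sets that can be nonempty are $F_1$ and $F_2$, and \ref{itm:FY} and \ref{itm:FR} already describe how these meet all the $Y_i$ and $R_i$.

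Next I would use connectivity and the absence of a comparable pair to empty out further sets. The mechanism is always the same: for a vertex $v$ in most of the sets, the recorded adjacencies force $N(v)\subseteq N(x)$ for a suitable $x\in C$ (for instance $Z$ is anti-complete to every $R_i$ by \ref{itm:ZR} and to $C$ by definition, and $U$ is anti-complete to every $Y_i$ and $F_i$ by \ref{itm:UYF}); if in addition $v\not\sim x$ then $v$ and $x$ are comparable, a contradiction, so such a $v$ does not exist. Iterating this---exploiting $t$, and more generally $F_1$, as a fixed vertex complete to $Y_3\cup Y_4\cup R_2\cup R_5$, together with \ref{itm:YiYi-2Yi+2}, \ref{itm:RiYi+1Yi+2} and the $H_1$-free consequence \ref{itm:Ri+1YiYi+2}---I expect to reach a bounded list of configurations in which $Z=U=\emptyset$ and the bulk of the $R_i$ and $Y_i$ have collapsed, the remaining cases being indexed by which of $F_2$ and a few specific $R_i$, $Y_i$ survive. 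Throughout, a subcase in which the leftover sets are too rich should produce an induced $H_1$, contradicting the hypothesis.

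In each surviving configuration $V(G)$ is a $C_5$ together with a constant number of stable sets whose pairwise adjacencies are fully pinned down (complete, anti-complete, or otherwise constrained by the recorded properties), and I would finish by writing down an explicit partition of $V(G)$ into four stable sets and checking it against those properties, exactly in the style of the proof of \autoref{lem:F1}. A representative colour class consists of two or three consecutive vertices of $C$ together with one or two of the off-cycle sets that are pairwise anti-complete and anti-complete to the chosen cycle vertices. (Here four colours are permitted but not forced: the tight examples $W_5$ and $\overline{C_7}$ do not arise, since neither contains an induced $C_5$ with an $F_1$-vertex---indeed $\overline{C_7}$ has no induced $C_5$ at all.)

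The main obstacle will be the bookkeeping of the case analysis: invoking connectivity and the no-comparable-pair hypothesis in exactly the right places to kill $Z$, $U$ and most of the $R_i$, $Y_i$, and then, for each configuration that genuinely survives, exhibiting four stable sets that cover $V(G)$ and verifying each one against the long list of adjacency properties. The most delicate interactions are those among $F_1$ (which is always nonempty), $F_2$, and the $R_i$, $Y_i$ sets they dominate, where $K_4$-freeness, $2P_2$-freeness and $H_1$-freeness all have to be played off against one another simultaneously.
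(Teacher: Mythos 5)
Your opening moves coincide with the paper's: run the \autoref{sec:pre} machinery on the $C_5$ inside the copy of $H_2$, observe that the extra vertex lies in an $F$-class, and combine \ref{itm:FiFi+2} with $H_1$-freeness to restrict the remaining $F$-classes. (In fact $H_1$-freeness empties \emph{all} the other $F$-classes, not only those two apart: two $F$-vertices of consecutive types are nonadjacent by $K_4$-freeness, and together with the five cycle vertices they induce an $H_1$; so the class you keep alive alongside the class of $t$ is actually empty. That is a minor point.) The genuine gap is the reduction your whole plan hangs on: that connectivity and the no-comparable-pair hypothesis will force $Z=U=\emptyset$ and collapse most of the $R_i,Y_i$, leaving a short list of fully pinned-down configurations. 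This step is not just unproved, the mechanism you describe cannot deliver it. A vertex $u\in U$ is adjacent to every vertex of $C$, so it is not comparable with any cycle vertex at all (comparability concerns nonadjacent pairs), and none of \ref{itm:ZR}--\ref{itm:RiYi+1Yi+2} forces $U=\emptyset$; the paper treats $U\neq\emptyset$ as a live case and $4$-colors it outright, using \ref{itm:YiYi+2} together with two new facts (that $U$ is complete to every $R_i$, and that then $R_i$ and $R_{i+2}$ are anticomplete). Similarly a $Z$-vertex may have neighbours spread over several $Y_i$, and then no cycle vertex dominates it: neighbours in $Y_3$, $Y_4$ and $Y_5$ already exclude every $x\in C$, and the paper does not refute such vertices but colors around them, proving dedicated claims about which $Y_i$ a $Z$-vertex can meet and then absorbing $Z$ vertex-by-vertex into a partial coloring (or switching to a different coloring when some $z$ meets $Y_3,Y_4,Y_5$ simultaneously). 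The $R_i$ and $Y_i$ likewise all persist; the real content of the proof is distributing them, split into finer pieces, among four classes.

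You are also missing the two devices that make that distribution possible. First, the copy of $H_2$ is not arbitrary: it is chosen to minimize $|U|$ and, subject to that, the size of the $F$-class containing $t$ (your $F_1$, the paper's $F_5$); this minimality, exploited by re-choosing the $C_5$, is exactly what proves \ref{itm:URi} ($U$ complete to every $R_i$) and \ref{itm:FR2R3} (each vertex of $R_2\cup R_3$ is complete or anticomplete to $F_5$). Second, from \ref{itm:FR2R3} together with the no-comparable-pair hypothesis one deduces that the $F$-class of $t$ is a single vertex, after which $R_2,R_3,R_5$ are split by adjacency to $t$ and a further list of anticompleteness properties of these halves against the $Y_i$ and $Z$ is established; only then do explicit four-colorings exist, and even then $Y_5=\emptyset$ (for example) is excluded by comparability rather than assumed away. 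Without the extremal choice of $H$, the $|F_5|=1$ step, and a plan for nonempty $U$ and $Z$, the ``bounded list of pinned-down configurations'' at the end of your outline never materializes, so the final coloring stage has nothing concrete to verify.
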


\begin{proof}
Let $H=C\cup \{f\}$ be an induced $H_2$ where $C=12345$ induces
a $C_5$ and $f$ is adjacent to $1$, $2$, $3$ and $4$. We partition $V\setminus C$
into subsets of $Z$, $R_i$, $Y_i$, $F_i$ and $U$ as in \autoref{sec:pre}.
By the fact that $G$ is $H_1$-free and \ref{itm:FiFi+2}, it follows that $F_i=\emptyset$
for $i\neq 5$. Note that $f\in F_5$. We choose $H$ such that

\medskip
$\bullet$ $|U|$ is minimum, and

\medskip
$\bullet$ $|F_5|$ is minimum subject to the previous condition.

\begin{enumerate}

\emitem {$U$ is complete to $R_i$ for $1\le i\le 5$.}\label{itm:URi}

Suppose not. Let $u\in U$ be nonadjacent to $r_i\in R_i$ for some $i$.
Suppose first that $1\le i\le 4$. Note that $C'=C\setminus \{i\}\cup \{r_i\}$ induces
a $C_5$ and $H'=C'\cup \{u\}$ induces an $H_2$. Since $5\in C'$, 
it follows that $F_5\cap U'=\emptyset$ and $U'\subseteq U$. 
Moreover, $u\in U$ is not in $U'$ since $u$ is not adjacent to $r_i$.
This implies that $|U'|<|U|$, contradicting the choice of $H$.

Now suppose that $i=5$. Note that $C'=C\setminus \{5\}\cup \{r_5\}$ induces
a $C_5$ and $H'=C'\cup \{u\}$ induces an $H_2$. 
Note that $U'\subseteq F_5\cup U$ and $u\notin U'$ since $u$ is not adjacent to $r_i$. 
By the chocie of $H$, there exists a vertex $f'\in F_5$ such that $f'$ is adjacent to $r_5$.  By \ref{itm:UYF},
$u$ and $f$ are not adjacent. But then $fr_5$ and $5u$ indcue a $2P_2$. \e

\emitem {If $U\neq \emptyset$, then $R_i$ and $R_{i+2}$ are anti-complete.}\label{itm:URiRi+2}

Let $u\in U$. If $r_i\in R_i$ and $r_{i+2}\in R_{i+2}$ are not adjacent, then 
$\{r_i,r_{i+2}, i+1,u\}$ induces a $K_4$, since $u$ is adjacent to $r_i$ and $r_{i+2}$
by \ref{itm:URi}. \e
\end{enumerate}

Suppose first that $U\neq \emptyset$. By \ref{itm:URiRi+2}, $R_i$ and $R_{i+2}$ are anti-complete.
Recall that $Y_i$ and $Y_{i+2}$ are anti-complete by \ref{itm:YiYi+2}. By \ref{itm:Ri+1YiYi+2},
$R_1$ is anti-complete to $Y_5\cup Y_2$ and $R_4$ is anti-complete to $Y_5\cup Y_3$. 
By \ref{itm:FY}, $F_5$ is anti-complete to $Y_1\cup Y_4$.
By \ref{itm:RiYi+1Ri+1Yi}, either $Y_3$ and $R_2$ are anti-complete or $Y_2$ and $R_3$ are anti-complete.

If $Y_3$ and $R_2$ are anti-complete, then $G$ admits the following $4$-coloring:
\begin{align*}
& Y_1\cup Y_4\cup U\cup F_5 				&\ref{itm:YiYi+2} \ref{itm:UYF} \ref{itm:FY} \\
& Y_2\cup Y_5\cup R_1\cup \{1\} 		&\ref{itm:YiYi+2} \ref{itm:Ri+1YiYi+2}\\
&  Y_3\cup R_2\cup R_4\cup \{2,4\}	&\ref{itm:URiRi+2} \ref{itm:Ri+1YiYi+2}\\
& R_3\cup R_5\cup Z\cup \{3,5\} 		&\ref{itm:URiRi+2} \ref{itm:UYF}
\end{align*}

If $Y_2$ and $R_3$ are anti-complete, then $G$ admits the following $4$-coloring:
\begin{align*}
& Y_1\cup Y_4\cup U\cup F_5 				&\ref{itm:YiYi+2} \ref{itm:UYF} \ref{itm:FY} \\
& Y_3\cup Y_5\cup R_4\cup \{4\} 		&\ref{itm:YiYi+2} \ref{itm:Ri+1YiYi+2}\\
&  Y_2\cup R_1\cup R_3\cup \{1,3\}	&\ref{itm:URiRi+2} \ref{itm:Ri+1YiYi+2}\\
& R_2\cup R_5\cup Z\cup \{2,5\} 		&\ref{itm:URiRi+2} \ref{itm:UYF}
\end{align*}

This shows that if $U\neq \emptyset$, then $G$ has a 4-coloring.
Therefore, we can assume in the following that $U=\emptyset$.
\begin{enumerate}
\setcounter{enumi}{2}

\emitem {Each vertex in $R_2\cup R_3$ is either complete or anti-complete to $F_5$.}\label{itm:FR2R3}

Suppose not. Let $r\in R_2\cup R_3$ be adjacent to $f\in F_5$ and not adjacent to $f'\in F_5$.
By symmetry, we may assume that $r\in R_2$. Note that $C'=C\setminus \{2\}\cup \{r\}$
induces a $C_5$ and $H'=C'\cup \{f\}$ induces an $H_2$. Clearly, $f'\notin F'_5$.
By the choice of $H$, there exists a vertex $y\in Y$ such that $y\in F'_5$. This means that
$y$ is not adjacent to $5$ but adjacent to $1$, $3$, $4$ and $r_2$. This implies that
$y\in Y_1$. By \ref{itm:FY}, $f'$ and $y$ are not adjacent. But now $f'2$ and $yr_2$
induce a $2P_2$. \e
\end{enumerate}

By \ref{itm:FY}, \ref{itm:FR} and \ref{itm:FR2R3}, only vertices in $R_5\cup Z$ can distinguish
two vertices in $F_5$. By \ref{itm:ZR}, $R_5\cup Z$ is an independent set and so $(F_5,R_5\cup Z)$ is a $2P_2$-free bipartite graph. 
This implies that $F_5=\{f\}$  since any two vertices in $F$ are comparable. 
Let $R'_i=N(f)\cap R_i$ and $R''_i=R_i\setminus R'_i$ for $i=2,3,5$.
We now prove properties of $R'_i$ and $R''_i$.

\begin{enumerate}
\setcounter{enumi}{3}

\emitem {$R'_5$ is anti-complete to $R'_2\cup R'_3$.}\label{itm:R'iR'j}

Suppose that $r'_5\in R'_5$ and $r'_2\in R'_2$ are adjacent. Then $\{r'_5,r'_2,1,f\}$
induces a $K_4$. \e

\emitem {$R'_5$ is anti-complete to $Y_2\cup Y_3$.}\label{itm:R'5Y}

Suppose that $r'_5\in R'_5$ and $y_2\in Y_2$ are adjacent. By \ref{itm:FY},
$f$ and $y_2$ are adjacent. Then $\{r'_5,4,y_2,f\}
$  induces a $K_4$. \e

\emitem {$R'_2$ is anti-complete to $R_4$. By symmetry, $R'_3$ is anti-complete to $R_1$.}\label{itm:R'2R4}

Suppose that $r'_2\in R'_2$ and $r_4\in R_4$ are adjacent. By \ref{itm:FR},
$f$ and $r_4$ are adjacent. Then $\{r'_2,r_4,3,f\} $  induces a $K_4$. \e

\emitem {$R''_5$ is anti-complete to $R''_2\cup R''_3$.}\label{itm:R''iR''j}

Suppose that $r''_5\in R''_5$ and $r''_2\in R''_2$ are adjacent. Then 
$r_5''r''_2$ and $f2$ induce a $2P_2$. \e

\emitem {$Y_5$ is anti-complete to $R''_2\cup R''_3$.}\label{itm:Y5R''2R''3}

Suppose that $y_5\in Y_5$ and $r''_2\in R''_2$ are adjacent.  By \ref{itm:FY},
$f$ and $y$ are not adjacent. Then  $y_5r''_2$ and $f4$ induce a $2P_2$. \e

\emitem {$R''_5$ is anti-complete to $Y_1\cup Y_4$.}\label{itm:R''5Y1Y4}

Suppose that $r''_5\in R''_5$ and $y_4\in Y_4$ are adjacent. 
By \ref{itm:FY}, $f$ and $y_4$ are not adjacent.
Then  $r''_5y_4$ and $f2$ induce a $2P_2$. \e

\emitem {$R''_2$ is anti-complete to $Y_1$. By symmetry, $R''_3$ is anti-complete to $Y_4$.}\label{itm:R''2Y1}

Suppose that $r''_2\in R''_2$ and $y_1\in Y_1$ are adjacent. 
By \ref{itm:FY}, $f$ and $y_1$ are not adjacent.
Then  $r''_2y_1$ and $f2$ induce a $2P_2$. \e

\emitem {$R'_2$ is anti-complete to $Y_3$. By symmetry, $R'_3$ is anti-complete to $Y_2$.}\label{itm:R'2Y3}

Suppose that $r'_2\in R'_2$ and $y_3\in Y_3$ are adjacent. By \ref{itm:FR},
$f$ and $y_3$ are adjacent. Then $\{r'_2,y_3,3,f\} $  induces a $K_4$. \e

\emitem {$Y_5$ is  complete to $R'_2\cup R'_3$.}\label{itm:Y5R'2R'3}

Suppose that $y_5\in Y_5$ and $r'_2\in R'_2$ are not adjacent.  By \ref{itm:FY},
$f$ and $y_5$ are not adjacent. Then 
$fr'_2$ and $5y_5$ induce a $2P_2$. \e
\end{enumerate}

We now prove properties of $Z$.
\begin{enumerate}
\setcounter{enumi}{12}
\emitem {Any vertex in $Z$ is anti-complete to either $Y_2$ or $Y_3$.}\label{itm:ZY2Y3}

Suppose not. Then there exists a vertex $z\in Z$ that is adjacent to a vertex $y_i\in Y_i$
for $i=2,3$. By \ref{itm:FY}, $f$ is adjacent to $y_2$ and $y_3$. Moreover,
$y_2$ and $y_3$ are adjacent by \ref{itm:YiYi+1}. This implies that $f$ and $z$
are not adjacent for otherwise $\{f,z,y_i,y_{i+1}\}$ would induce a $K_4$.

We now show that $z$ is anti-complete to $Y_1\cup Y_4\cup Y_5$.
Suppose not. Let $z$ be adjacent to a vertex $y\in  Y_1\cup Y_4\cup Y_5$.
Note that there exists a vertex $i\in N_C(f)$ such that $i$ is not adjacent to $y$.
Moreover, $f$ and $y$ are not adjacent by \ref{itm:FY}. Then $zy$ and $if$
induce a $2P_2$. This shows that $z$ is anti-complete to $Y_1\cup Y_4\cup Y_5$.
Recall that $Z$ is anti-complete to $R_i$ for each $i$ by \ref{itm:ZR}.
Therefore, $N(z)\subseteq Y_2\cup Y_3\subseteq N(f)$, contradicting the assumption
that $G$ has no pair of comparable vertices. \e

\emitem {If $z\in Z$ is not adjacent to $y_i\in Y_i$, then $y_i$ is complete to $N(z)\setminus Y_i$.}\label{itm:ZYi}

It suffices to prove for $i=1$ by symmetry. Note that $N(z)\setminus Y_1=(N(z)\cap (Y_2\cup Y_5))\cup (N(z)\cap (Y_3\cup Y_4))$.
By \ref{itm:YiYi+1}, $y_1$ is complete to $N(z)\cap (Y_2\cup Y_5)$. It remains to show that $y_1$ is complete
to $N(z)\cap (Y_3\cup Y_4)$. Suppose not. Let $y\in N(z)\cap (Y_3\cup Y_4)$ be  nonadjacent to $y_1$.
By symmetry, we may assume that $y\in Y_3$. Then $zy$ and $y_14$ induce a $2P_2$.  \e

\emitem {If $z$ is anti-complete to $Y_i$ for some $i\in \{2,3\}$, then $Y_i=\emptyset$.}\label{itm:Y2Y3empty}

Suppose that $z$ is anti-complete to $Y_2$ and $Y_2$ contains a vertex $y_2$.
It follows from \ref{itm:ZYi} that $N(z)\subseteq N(y_2)$, contradicting the assumption
that $G$ contains no pair of comparable vertices. \e
\end{enumerate}

If $Y_5=\emptyset$, then $N(5)=\{1,4\}\cup R_1\cup R_4\cup Y_2\cup  Y_3\subseteq N(f)$
by \ref{itm:FY} and \ref{itm:FR}. This contradicts the assumption that $G$ contains no pair of comparable vertices.
So, we assume in the following that $Y_5$ contains a vertex $y_5$.
We claim now that either $R''_2$ or $R''_3$ is empty. Suppose not.
Let $r''_i\in R''_i$ for $i=2,3$. By \ref{itm:RiRi+1}, $r''_2$ and $r''_3$ are adjacent.
Moreover, $y_5$ is not adjacent to $r''_2$ and $r''_3$ by \ref{itm:Y5R''2R''3}.
Then $r''_2r''_3$ and $5y_5$ induce a $2P_2$.
This proves that either $R''_2$ or $R''_3$ is empty.
We consider two cases depending on whether $f$ has a neighbor in $R_5$.

\medskip
\noindent {\bf Case 1.} $R'_5=\emptyset$, i.e., $f$ has no neighbor in $R_5$.
Therefore, $R_5=R''_5$.  
Recall that either $R''_2$ or $R''_3$ is empty.
By symmetry, we may assyme that $R''_2=\emptyset$.
Then $R_2=R'_2$ and so $R_2$ and $R_4$ are anti-complete by \ref{itm:R'2R4}.
Let $Y'_2=\{y\in Y_2: y \textrm{ is anti-complete to } Y_5\}$ and $Y''_2=Y_2\setminus Y'_2$.
Note that each vertex in $Y''_2$ has a neighbor in $Y_5$ by the definition and so is anti-complete to $Y_4$ by \ref{itm:YiYi-2Yi+2}. 
Then the following is a $4$-coloring $\phi$ of $G-(R_3\cup Z)$:
\begin{align*}
I_1&= Y'_2\cup Y_5\cup R_1 \cup \{1\}	&	\ref{itm:Ri+1YiYi+2} \\
I_2&= Y''_2\cup Y_4\cup R_3\cup \{3\} 	&	\textrm{Definition of $Y''_2$}\\
I_3&=  R_2(=R'_2)\cup R_4\cup Y_3\cup \{2,4\}	&	\ref{itm:Ri+1YiYi+2}  \ref{itm:R'2Y3}\\
I_4&= Y_1\cup R_5(=R''_5)\cup \{f,5\}					& \ref{itm:FY}	\ref{itm:R''5Y1Y4} 
\end{align*}

We now extend $\phi$ to $R_3$ as follows. Since $R_3$ is an independent set by \ref{itm:ZR}, it suffices to 
explain how to extend $\phi$ to each vertex in $R_3$ independently. Let $r_3\in R_3$ be an arbitrary vertex.
Suppose first that $r_3\in R'_3$. By  \ref {itm:R'2R4} and \ref{itm:R'2Y3}, $r_3$ is anti-complete to
$R_1\cup Y_2$. By \ref{itm:RiYi+1Yi+2}, $r_3$ is anti-complete to either $Y_4$ or $Y_5$. Therefore,
we can add $r_3$ to either $I_1$ or $I_2$. Now suppose that $r_3\in R''_3$. By \ref{itm:R''iR''j}
and \ref{itm:R''2Y1}, $r_3$ is anti-complete to $Y_4\cup R_5$.
By \ref{itm:RiYi+1Yi+2}, $r_3$ is anti-complete to either $Y_1$ or $Y_2$.
Therefore, we can add $r_3$ to either $I_2$ or $I_4$.
This shows that $G-Z$ admits a $4$-coloring $\phi'=(I'_1,I'_2,I'_3,I'_4)$ with $I_i\subseteq I'_i$ for each $1\le i\le 4$.

We now obtain a 4-coloring of $G$ by either extending $\phi'$ to $Z$ or by finding another 4-coloring of $G$. 
If $Z$ is anti-complete to $Y_3$, then we can extend $\phi'$ by adding $Z$ to $I'_3$.
So, we assume that there is a vertex $z\in Z$ that is adjacent to a vertex  in $Y_3$.
It then follows from \ref{itm:ZY2Y3} and \ref{itm:Y2Y3empty} that $Y_2=\emptyset$.
If each vertex in $Z$ is anti-complete to one of $Y_3$, $Y_4$ and $Y_5$, then
we can extend $\phi'$ to $Z$ by adding each vertex in $Z$ to $I'_1$, $I'_2$ or $I'_3$
(since $Y_2=\emptyset$). Therefore, let $z\in Z$ be adjacent to $y_i\in Y_i$ for $i\in \{3,4,5\}$. 
We prove some additional properties using the existence of $y_3$, $y_4$ and $y_5$.
First of all, $R_1$ and $R_4$ are anti-complete. Suppose not. Let $r_1\in R_1$ and $r_4\in R_4$
be adjacent. By \ref{itm:Ri+1YiYi+2}, $y_5$ is not adjacent to $r_1$ and $r_4$.
Then $r_1r_4$ and $zy_5$ induce a $2P_2$. Secondly, $y_3$ and $y_5$ are not adjacent
for otherwise $\{y_3,y_4,y_5,z\}$ induces a $K_4$. Thirdly, $Y_1$ and $Y_4$ are anti-complete
to each other. Suppose not. Then $Y_1$ contains a vertex $y_1$ that is not anit-complete to $Y_4$.
By \ref{itm:YiYi-2Yi+2}, $y_1$ is anti-complete to $Y_3$. Then $fy_3$ and $y_1y_5$ induce a $2P_2$.
Now $G$ admits the following $4$-coloring:
\begin{align*}
&  Y_1\cup R''_5 (=R_5)\cup Y_4 \cup \{f,5\}	&	\ref{itm:R''5Y1Y4} \\
&  Y_3\cup R'_2 (=R_2)\cup \{2\} 					&	\ref{itm:R'2Y3}\\
& R_1\cup R_4\cup Y_5\cup \{1,4\}	&	\ref{itm:Ri+1YiYi+2} \\
& R_3\cup Z\cup \{3\}					& 	\ref{itm:ZR}	
\end{align*}

\medskip
\noindent {\bf Case 2.} $R'_5\neq \emptyset$. Let $r'_5\in R'_5$.
If $r_1\in R_1$ and $r_4\in R_4$ are adjacent, then $\{r_1,r_4,r'_5,f\}$
induces a $K_4$ by \ref{itm:RiRi+1} and \ref{itm:FR}.
So, $R_1$ and $R_4$ are anti-complete. We now consider two subcases.

\medskip
\noindent {\bf Case 2.1.} $R''_2$ and $Y_3$ are not anti-complete.
Let $r''_2\in R''_2$ and $y_3\in Y_3$ be adjacent. We claim first that
$Y_1$ and $Y_4$ are anti-complete. Suppose not. Let $y_1\in  Y_1$
and $y_4\in Y_4$ be adjacent. Then $y_3$ and $y_4$ are adjacent by \ref{itm:YiYi+1}.
By \ref{itm:YiYi-2Yi+2}, $y_1$ is not adjacent to $y_3$. Moreover, $y_1$ is not adjacent
to $r''_2$ by \ref{itm:R''2Y1}. But now $4y_1$ and $y_3r_2''$ induce a $2P_2$.
This shows that $Y_1$ and $Y_4$ are anti-complete.
Moreover, $Y_2$ and $R_3$ are anti-complete by \ref{itm:RiYi+1Ri+1Yi}.
Therefore, the following is a $4$-coloring $\phi$ of $G-(R''_2\cup Z)$.
\begin{align*}
I_1&= R_4\cup Y_5 \cup R_1 \cup \{1,4\}	&	\ref{itm:Ri+1YiYi+2}\\
I_2&= Y_1\cup R''_5\cup Y_4 \cup \{f,5\}					&	\ref{itm:R''5Y1Y4}\\
I_3&=  R_3\cup  Y_2\cup \{3\}	&						\ref{itm:RiYi+1Ri+1Yi}	 \\
I_4&= Y_3\cup R'_2\cup R'_5\cup \{2\}					& 	\ref{itm:R'iR'j} \ref{itm:R'5Y} \ref{itm:R'2Y3}
\end{align*}

We now explain how to extend $\phi$ to each vertex in $R''_2\cup Z$.
Since $R''_2\cup Z$ is an independent set by \ref{itm:ZR}, this will give a $4$-coloring of $G$.
By \ref{itm:ZY2Y3}, we can add each vertex in $Z$ to either $I_3$ or $I_4$.
Let $s''\in R''_2$ be an arbitrary vertex. Then $s''$ is anti-complete to $R''_5\cup  Y_1$ by
\ref{itm:R''iR''j} and \ref{itm:R''2Y1}. If $s''$ is not anti-complete to $Y_3$, 
then $s''$ is anti-complete to $Y_4$ by \ref{itm:RiYi+1Yi+2} and thus we can  add $s''$ to $I_2$.
Now $s''$ is anti-complete to $Y_3$. We claim that $s''$ is anti-complete to $R'_5$.
Suppose not. Then $s''$ is adjacent to some vertex $r'\in R'_5$. Note that $y_3$ is not
adjacent to $s''$ by our assumption. Moreover, $y_3$ is not adjacent to $r'$ by \ref{itm:R'5Y}.
Then $s''r'$ and $5y_3$ induce a $2P_2$.  This shows that $s''$ is anti-complete to $R'_5$
and thus we can add $s''$ to $I_4$.

\medskip
\noindent {\bf Case 2.2.} $R''_2$ and $Y_3$ are anti-complete. By symmetry,
$R''_3$ and $Y_2$ are anti-complete. It follows from \ref{itm:R'2Y3} that
$R_2$ and $Y_3$ are anti-complete and $R_3$ and $Y_2$ are anti-complete.
Recall that either $R''_2$ or $R''_3$ is empty.
By symmetry, we may assume that $R''_2=\emptyset$.
Then $R_2=R'_2$.
We now claim that $R'_3$ is anti-complete to $Y_4$. Suppose not. Let $r'_3\in R'_3$
be adjacent to $y_4\in Y_4$. By \ref{itm:Y5R'2R'3}, $r'_3$ is adjacent to $y_5$.
But this contradicts \ref{itm:RiYi+1Yi+2}. So, $R'_3$ is anti-complete to $Y_4$. 
By symmetry, $R'_2$ is anti-complete to $Y_1$. This together with \ref{itm:R''2Y1}
implies that $R_3$ and $R_2$ are anti-complete to $Y_4$ and $Y_1$, respectively.
Let $Y'_4=\{y\in Y_4: y \textrm{ is anti-complete to } Y_1\}$ and $Y''_4=Y_4\setminus Y'_4$.
Note that each vertex in $Y''_4$ has a neighbor in $Y_1$ and so is anti-complete to $Y_2$ by \ref{itm:YiYi-2Yi+2}.
Now $G-Z$ admits a $4$-coloring $\phi$:
\begin{align*}
I_1&= R_4\cup Y_5 \cup R_1 \cup \{1,4\}	&	\ref{itm:Ri+1YiYi+2}\\
I_2&= Y_1\cup R''_5\cup Y_4' \cup \{f,5\}					&	\ref{itm:R''5Y1Y4}\\
I_3&=  R_3\cup  Y_2\cup Y''_4\cup \{3\}	&						\ref{itm:RiYi+1Ri+1Yi}	 \\
I_4&= Y_3\cup R'_2\cup R'_5\cup \{2\}					& 	\ref{itm:R'iR'j} \ref{itm:R'5Y} \ref{itm:R'2Y3}
\end{align*}

We now explain how to obtain a $4$-coloring of $G$ based on $\phi$.
If $Z$ is anti-complete to $Y_3$, then we can add $Z$ to $I_4$.
So, assume that there exists a vertex in $Z$ that is adjacent to some vertex in $Y_3$.
It then follows from \ref{itm:ZY2Y3} and \ref{itm:Y2Y3empty} that $Y_2=\emptyset$.
If each vertex in $Z$ is anti-complete to one of $Y_3$, $Y''_4$ and $Y_5$, then
we can extend $\phi'$ to $Z$ by adding each vertex in $Z$ to $I_1$, $I_3$ or $I_4$
(since $Y_2=\emptyset$). Therefore, let $z\in Z$ be adjacent to $y_i\in Y_i$ for $i\in \{3,5\}$
and be adjacent to $y_4\in Y''_4$. 
Note that $y_3$ and $y_5$ are not adjacent for otherwise $\{y_3,y_4,y_5,z\}$ induces a $K_4$. 
We claim that $Y_1$ and $Y_4$ are anti-complete to each other. Suppose not. 
Then $Y_1$ contains a vertex $y_1$ that is not anit-complete to $Y_4$.
By \ref{itm:YiYi-2Yi+2}, $y_1$ is anti-complete to $Y_3$. Then $fy_3$ and $y_1y_5$ induce a $2P_2$.
Now $G$ admits the following $4$-coloring:
\begin{align*}
& R_4\cup Y_5 \cup R_1 \cup \{1,4\}	&	\ref{itm:Ri+1YiYi+2}\\
&  Y_1\cup R''_5\cup Y_4\cup \{f,5\}					&	\ref{itm:R''5Y1Y4}\\
& R_3\cup  Z\cup \{3\}	&						\ref{itm:ZR}	 \\
&  Y_3\cup R'_2\cup R'_5\cup \{2\}					& 	\ref{itm:R'iR'j} \ref{itm:R'5Y} \ref{itm:R'2Y3}
\end{align*}

This completes the proof.
\end{proof}

\section{Eliminate $W_5$ and $C_5$}\label{sec:W5C5} 

In this section we prove two lemmas. The fist one states that our main theorem, \autoref{thm:main}, holds when $G$ is connected, has no pair of comparable vertices, does not contain $H_1$ or $H_2$ as an induced subgraph, but contains the $5$-wheel as an induced subgraph. The second lemma then assumes that $G$ is $W_5$-free as well, but contains an induced $C_5$.

\begin{lemma}\label{lem:W5}
Let $G$ be a $(2P_2,K_4,  H_1, H_2)$-free graph with no pair of comparable vertices.
If $G$ contains an induced $W_5$, then $\chi(G)\le 4$.
\end{lemma}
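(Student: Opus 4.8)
The plan is to run the $C_5$-structure machinery of \autoref{sec:pre}. Since $G$ contains an induced $W_5$, its rim is an induced $C_5$ of $G$, say $C=12345$, and its hub lies in $U$, so $U\neq\emptyset$; apply \autoref{lem:partition} to partition $V(G)$. The first move is to observe that $F_i=\emptyset$ for every $i$, since a vertex of $F_i$ has exactly four neighbours on $C$ and would, together with $C$, induce an $H_2$. Hence $V(G)=C\cup Z\cup R\cup Y\cup U$ with $R=\bigcup_i R_i$ and $Y=\bigcup_i Y_i$.

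Next I would record the adjacencies forced by $U\neq\emptyset$ and $H_2$-freeness, in the spirit of claim \ref{itm:URi} but without any minimality argument: $U$ is complete to each $R_i$ (if $u\in U$ missed $r\in R_i$, then $(C\setminus\{i\})\cup\{r\}$ is an induced $C_5$ on which $u$ has exactly four neighbours, giving an induced $H_2$), and hence $R_i$ and $R_{i+2}$ are anti-complete (an edge $r_ir_{i+2}$, the vertex $i+1$, and any $u\in U$ would form a $K_4$). Combining this with \ref{itm:ZR}, \ref{itm:UYF}, \ref{itm:RiRi+1}, \ref{itm:YiYi+1}, \ref{itm:RiYi}, \ref{itm:YiYi+2}, \ref{itm:YiYi-2Yi+2} and \ref{itm:RiYi+1Yi+2}, the picture is: both $R$ and $Y$ are blow-ups of $C_5$ (each part independent, consecutive parts complete, parts at distance $2$ anti-complete), $R_i$ is complete to $Y_i$, $U$ is complete to $R$ and anti-complete to $Y$, and $Z$ is independent and anti-complete to $C\cup R$, so every neighbour of a vertex of $Z$ lies in $Y\cup U$.

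With this structure in hand, I would build the four colour classes starting from the pentagon colouring $\{1,3\}$, $\{2,4\}$, $\{5\}$ of $C$: each $R_i$ is non-adjacent to exactly one of these three sets and each $Y_i$ to at most one, with $Y_5$ fitting none, so the intended fourth class is $U\cup Y_5\cup Y_j$ for a suitable $j$ at distance $2$ from $5$, while the distance-$2$ anti-completeness lets same-coloured $R$- and $Y$-classes coexist. The only unresolved adjacencies are those between $R_i$ and $Y_{i+1}$ (and $Y_{i-1}$): by \ref{itm:RiYi+1Ri+1Yi}, for each $i$ at least one of the pairs $\{R_i,Y_{i+1}\}$, $\{R_{i+1},Y_i\}$ is anti-complete, and \ref{itm:RiYi+1Yi+2} gives the per-vertex control that lets each $R_i$ be split into two parts, as in the $R'_i,R''_i$ bookkeeping of \autoref{lem:F2}. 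Up to the dihedral symmetry of $W_5$ this leaves only a small number of patterns, and for each I would exhibit four explicit stable sets, annotated with the items of \autoref{sec:pre} certifying their stability, exactly as in \autoref{lem:F1} and \autoref{lem:F2}.

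The step I expect to be the crux is absorbing $Z$: a vertex of $Z$ may be adjacent to several consecutive $Y_i$'s and to vertices of $U$, so it need not miss the stable set we want to place it into. The plan is to prove, from the absence of comparable vertices (no $z\in Z$ is comparable to the hub $u_0$ or to the relevant $Y$-vertices) together with \ref{itm:YiYi-2Yi+2}, that every $z\in Z$ is anti-complete to some $Y$-class appearing in one of the four colour classes --- either because that class is empty, or because $z$ dominates no consecutive pair containing it --- and then to insert $Z$ into the colouring class by class, recolouring and shifting a now-conflicting part of $Y$ when needed, which is the same recolour-and-move manoeuvre used throughout \autoref{lem:F2}. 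Once $Z$ is placed in every pattern, the lemma follows.
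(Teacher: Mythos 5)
Your opening moves match the paper: the rim of the $W_5$ gives the $C_5$-partition with $U\neq\emptyset$, $H_2$-freeness kills every $F_i$, $U$ is complete to $R$ (no minimality needed), hence $R_i$ and $R_{i+2}$ are anti-complete, and \ref{itm:YiYi+2} gives $Y_i$ anti-complete to $Y_{i+2}$. But from there the proposal has a genuine gap: it is a plan rather than a proof. You declare that the $R_i$--$Y_{i\pm1}$ adjacencies are "unresolved" and defer to an unspecified case analysis over "patterns" with $R'_i,R''_i$ splits, never exhibiting the stable sets. The key fact you miss is that these adjacencies are \emph{not} unresolved: if $r_i\in R_i$ is adjacent to $y\in Y_{i+1}$ (or $Y_{i-1}$), then replacing vertex $i$ of $C$ by $r_i$ yields an induced $C_5$ on which $y$ has exactly four consecutive neighbours, i.e.\ an induced $H_2$ --- so $R_i$ is anti-complete to $Y_{i+1}\cup Y_{i-1}$. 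With that in hand a single explicit colouring works, e.g.\ $R_1\cup R_3\cup Z\cup\{1,3\}$, $R_2\cup Y_3\cup R_4\cup\{2,4\}$, $Y_1\cup R_5\cup Y_4\cup\{5\}$, $Y_2\cup Y_5\cup U$, and no pattern analysis is needed. Your structural summary is also too optimistic in one place: the distance-two anti-completeness you established is only between $R$-parts and between $Y$-parts, so it does not let you place $R_i$ and $Y_{i\pm2}$ in a common class, which your sketch implicitly relies on ("the distance-$2$ anti-completeness lets same-coloured $R$- and $Y$-classes coexist").

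The second gap is the treatment of $Z$, which you yourself flag as the crux but leave as an intention ("the plan is to prove\dots"), invoking non-comparability and a recolour-and-move manoeuvre that is never carried out; it is not established that every $z\in Z$ misses some $Y$-class of your (unspecified) colouring. In fact none of this is necessary: $Z$ is independent and anti-complete to $C\cup R$ by \ref{itm:ZR} and the definition of $Z$, so it can be dumped wholesale into a colour class consisting only of $R$-parts and $C$-vertices, as in the colouring above; the comparable-vertices hypothesis plays no role in this lemma. To turn your proposal into a proof you would need either to spot the $H_2$-based anti-completeness of $R_i$ and $Y_{i\pm1}$ (after which the lemma is a few lines), or to actually enumerate and colour the patterns and prove the $Z$-absorption claim, neither of which is done.
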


\begin{proof}
Let $W=C\cup \{u\}$ be an induced $W_5$ such that $C=12345$ induces a $C_5$ in this order  and $u$ is complete to $C$. 
We partition $V\setminus C$ into subsets of $Z$, $R_i$, $Y_i$, $F_i$ and $U$ as in \autoref{sec:pre}.
Note that $u\in U$. Since $G$ is $H_2$-free,  it follows that $F_i=\emptyset$ for each $i$.
We prove the following properties.

\begin{enumerate}
\emitem {$U$ is complete to $R$.}\label{itm:UcomR}

If $u'\in U$ is not adjacent to $r_i\in R_i$, then $C\setminus \{i\}\cup \{r_i,u\}$
induces an $H_2$. This contradicts our assumption that $G$ is $H_2$-free. \e

\emitem {$R_i$ and $R_{i+2}$ are anti-complete.}\label{itm:RianticRi+2}

Suppose that $r_i\in R_i$ and $r_{i+2}\in R_{i+2}$ are not adjacent. 
By \ref{itm:UcomR}, $u$ is adjacent to both $r_i$ and $r_{i+2}$.
This implies that $\{r_i,r_{i+2},i+1,u\}$ induces a $K_4$. \e

\emitem {$R_i$ and $Y_{i+1}$ are anti-complere.}\label{itm:RiYi+1}

It suffices to prove for $i=1$. If $r_1\in R_1$ and $y_2\in Y_2$ are adjacent,
then $C\setminus \{1\}\cup \{r_1,y_2\}$ induces an $H_2$, a contradiction. \e

\emitem {$Y_i$ and $Y_{i+2}$ are anti-complete.}\label{itm:YianticYi+2}

Since $U\neq \emptyset$, \ref{itm:YianticYi+2} follows directly from \ref{itm:YiYi+2}. \e

\end{enumerate}

It follows from \ref{itm:RianticRi+2}--\ref{itm:YianticYi+2} and \ref{itm:ZR}--\ref{itm:UYF}
that $G$ admits the following $4$-coloring:
\begin{align*}
& R_1\cup R_3\cup Z \cup \{1,3\}	&	\ref{itm:RianticRi+2} \ref{itm:ZR} \\
& R_2\cup Y_3\cup R_4\cup \{2,4\} &	\ref{itm:RianticRi+2} \ref{itm:RiYi+1}\\
&  Y_1\cup R_5\cup Y_4\cup \{5\}	&	\ref{itm:RiYi+1} \ref{itm:YianticYi+2}\\
& Y_2\cup Y_5\cup U 					&	\ref{itm:YianticYi+2} \ref{itm:UYF}
\end{align*}
This completes our proof.
\end{proof}

\begin{lemma}\label{lem:C5}
Let $G$ be a connected $(2P_2,K_4, H_1,H_2,W_5)$-free graph with no pair of comparable vertices. 
If $G$ contains an induced $C_5$, then $\chi(G)\le 4$.
\end{lemma}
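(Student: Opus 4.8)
The plan is to mimic the structure of \autoref{lem:F1} and \autoref{lem:F2}: fix an induced $C_5$, call it $C=12345$, partition $V(G)\setminus C$ into $Z,R_i,Y_i,F_i,U$ as in \autoref{sec:pre}, use the forbidden subgraphs to pin down the adjacencies between the parts, and then exhibit a 4-colouring in each of a short list of cases. First I would record the immediate consequences of the hypotheses. Since $G$ is $W_5$-free, no vertex is complete to $C$, so $U=\emptyset$; since $G$ is $H_2$-free, no vertex has exactly four neighbours on $C$ (such a vertex together with $C$ induces an $H_2$), so $F_i=\emptyset$ for all $i$. Hence $V(G)=C\cup Z\cup\big(\bigcup_i R_i\big)\cup\big(\bigcup_i Y_i\big)$. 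Running the argument of \ref{itm:RiYi+1} from \autoref{lem:W5} (which uses only $H_2$-freeness), $R_i$ is anti-complete to $Y_{i-1}\cup Y_{i+1}$ for every $i$; in particular \ref{itm:RiYi+1Ri+1Yi} and \ref{itm:RiYi+1Yi+2} become vacuous, and the only $Y$-neighbours of a vertex of $R_i$ lie in $Y_{i-2}\cup Y_i\cup Y_{i+2}$, with $R_i$ complete to $Y_i$ by \ref{itm:RiYi}. Finally, by \ref{itm:ZR} and the absence of $C$-neighbours, every $z\in Z$ has $N(z)\subseteq\bigcup_i Y_i$.

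The essential new feature, compared with \autoref{lem:W5}, is that $U=\emptyset$ removes two facts: \ref{itm:YiYi+2} no longer applies, so $Y_i$ and $Y_{i+2}$ need not be anti-complete (we only have \ref{itm:YiYi-2Yi+2}: each $Y_i$-vertex is anti-complete to $Y_{i+2}$ or to $Y_{i-2}$), and nothing forces $R_i$ and $R_{i+2}$ to be anti-complete. To cope I would (a) split each $Y_i$ into $Y_i'$ (the vertices anti-complete to $Y_{i-2}$) and $Y_i''=Y_i\setminus Y_i'$ (the rest, each of which then has a neighbour in $Y_{i-2}$ and is therefore anti-complete to $Y_{i+2}$), echoing the $D'_{i,i+1}/D''_{i,i+1}$ refinement of \autoref{lem:F1}, and split $R_i$ analogously whenever an $R_i$--$R_{i+2}$ edge is present; and (b) handle $Z$ as in \ref{itm:ZY2Y3}--\ref{itm:Y2Y3empty} of \autoref{lem:F2}: a vertex of $Z$ adjacent into two non-consecutive sets $Y_i,Y_{i+2}$ is non-adjacent to $Y_{i+1}$ by $K_4$-freeness and then, by $2P_2$-freeness, comparable to a vertex of $C$ unless some $Y_j$ is empty, which is excluded. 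The upshot of (b) is that $Z$ can be appended to any colour class contained in $R\cup C$, so I would make sure every colouring below has such a class.

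For the colouring itself, note that the four sets
\[
\{1,3\}\cup R_1\cup R_3\cup Z,\quad \{2,4\}\cup R_2\cup R_4\cup Y_3,\quad \{5\}\cup R_5\cup Y_1\cup Y_4,\quad Y_2\cup Y_5,
\]
used in \autoref{lem:W5}, together with the five rotations of this partition about $C$, give a valid 4-colouring as soon as some rotation avoids all of the currently active $R_i$--$R_{i+2}$ and $Y_i$--$Y_{i+2}$ pairs (all other required non-adjacencies are automatic from the first paragraph). So I would first dispose of every configuration in which the active pairs are few or clustered, and then treat the remaining configurations, in which the active pairs are ``spread out'', by arguing that this forces several of the $R_i$ and $Y_i$ to be empty — for instance, if $R_1$ and $R_3$ are not anti-complete then a short $K_4$/$2P_2$ argument using a vertex of $C$ constrains $R_2$--$R_4$ and $Y_2,Y_5$, and if in addition $Y_1$ and $Y_3$ are not anti-complete a further set empties — after which I would exhibit a tailored 4-colouring in each small configuration, using the $Y_i'/Y_i''$ split to push the ``bad half'' of a $Y_i$ into the colour class of its complete neighbour $Y_{i-1}$, which by \ref{itm:YiYi-2Yi+2} is forced to miss exactly that half.

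As in \autoref{lem:F1} and \autoref{lem:F2}, the main obstacle is not any single step but controlling the case explosion: although $U=F=\emptyset$ makes the partition simpler, the simultaneous presence of active $R_i$--$R_{i+2}$ edges and active $Y_i$--$Y_{i+2}$ edges with several $R_i$ and $Y_i$ non-empty is exactly where all rotations of the base colouring fail, and one must repeatedly invoke $K_4$-freeness and the no-comparable-vertices hypothesis to empty sets before a four-class colouring becomes visible. I expect the bookkeeping to be of the same order as \autoref{lem:F1}, and the dihedral symmetry of $C_5$ to be indispensable for keeping the number of genuinely distinct cases small.
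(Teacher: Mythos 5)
Your setup (forcing $U=F=\emptyset$ from $W_5$- and $H_2$-freeness, $R_i$ anti-complete to $Y_{i\pm1}$, $N(z)\subseteq\bigcup_i Y_i$ for $z\in Z$, and splitting $Y_4$ and $R_4$ into primed/double-primed halves) matches the paper, but the part of your plan that carries the real weight --- the treatment of $Z$ --- has a genuine gap. You propose to ``handle $Z$ as in \ref{itm:ZY2Y3}--\ref{itm:Y2Y3empty} of \autoref{lem:F2}'', but those arguments are built around the vertex $f\in F_5$, which does not exist here; there is no analogous vertex to play against $z$ in the $2P_2$/comparability arguments. Your substitute claim, that a $Z$-vertex with neighbours in $Y_i$ and $Y_{i+2}$ is non-adjacent to $Y_{i+1}$ by $K_4$-freeness, is unjustified: the $K_4$ would need the $Y_i$- and $Y_{i+2}$-neighbours to be adjacent to each other, which \ref{itm:YiYi-2Yi+2} does not give you. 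Moreover your conclusion is misstated: ``some $Y_j$ is empty'' is not excluded --- it is a genuine outcome. The paper's proof uses $W_5$-freeness to show no $z\in Z$ meets all five $Y_i$, then shows that a $z$ meeting four consecutive $Y_i$'s forces $Y_{i+4}=\emptyset$ by comparability (its Case~1), and only then colours.

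The second problem is your plan to ``make sure every colouring has a class contained in $R\cup C$'' so that $Z$ can be dumped into it. When all five $Y_i$ are nonempty this is not available in general: consecutive $Y_i$'s are complete (\ref{itm:YiYi+1}), $R_i$ is complete to $Y_i$ (\ref{itm:RiYi}) and to $R_{i\pm1}$ (\ref{itm:RiRi+1}), and indeed in the paper's Case~2 every one of the four colour classes contains $Y$-vertices. The paper resolves this by distributing $Z$ vertex-by-vertex among $Y$-containing classes, using three claims absent from your proposal: a $z$ adjacent to both $Y_3$ and $Y_5$ must have a neighbour in $Y_4$ (otherwise $N(z)\subseteq N(3)$ or $N(z)\subseteq N(5)$, contradicting no comparable vertices); such a $z$ then misses $Y_1\cup Y_2$ (since in Case~2 each $z$ meets at most three $Y_i$'s); and such a $z$ is anti-complete to $Y_4'$ or $Y_4''$ (a $2P_2$ through a $Y_1$-neighbour of a $Y_4''$-vertex). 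Without these, or some replacement for them, the colouring step of your outline does not go through; the remaining vagueness (``rotations of the base colouring'' plus emptying sets in the spread-out configurations) also leaves the $R_i$--$R_{i+2}$ interaction unresolved, where the paper uses the per-vertex fact that each vertex of $R_i$ is anti-complete to $R_{i-2}$ or $R_{i+2}$ (an $H_2$ argument) to make the $R_4'/R_4''$ split legal.
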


\begin{proof}
Let  $C=12345$ be an induced  $C_5$ in this order. We partition $V\setminus C$
into subsets of $Z$, $R_i$, $Y_i$, $F_i$ and $U$ as in \autoref{sec:pre}.
Since $G$ is $(H_2,W_5)$-free, both $U$ and $F_i$ are empty. It then follows from
\autoref{lem:partition} that 
$V(G)=C\cup Z\cup (\bigcup_{i=1}^{5}R_i)\cup (\bigcup_{i=1}^{5}Y_i)$.
We first prove the following properties of $R_i$ and $Z$.
\begin{enumerate}
\emitem {Each vertex in $R_i$ is anti-complete to either $R_{i-2}$ or $R_{i+2}$.}\label{itm:Ri-2Ri+2}

It suffices to prove for $i=4$. Suppose that $r_4\in R_4$ is adjacent to a vertex $r_i\in R_i$
for $i=1,2$. By \ref{itm:RiRi+1}, $r_1$ and $r_2$ are adjacent. This implies that $\{r_1,r_2,3,4,5,r_4\}$
induces a subgraph isomorphic to $H_2$. This contradicts the assumption that $G$ is $H_2$-free. \e

\emitem {$R_i$ and $Y_{i+1}$ are anti-complete.}\label{itm:RiYi+1antic}

It suffices to prove for $i=1$. If $r_1\in R_1$ and $y_2\in Y_2$ are adjacent,
then $C\setminus \{1\}\cup \{r_1,y_2\}$ induces an $H_2$. \e

\emitem {Each vertex in $Z$ cannot have a neighbor in each of $Y_i$ for $1\le i\le 5$.}\label{itm:ZY}

Suppose that $z\in Z$ has a neighbor $y_i\in Y_i$ for each $1\le i\le 5$.
By \ref{itm:YiYi+1}, $y_i$ and $y_{i+1}$ are adjacent. This implies that
$y_i$ and $y_{i+2}$ are not adjacent, for otherwise $\{y_i,y_{i+1},y_{i+2},z\}$
induces a $K_4$. But now $\{y_1,y_2,y_3,y_4,y_5,z\}$ induces a $W_5$. \e

\emitem {If $z\in Z$ has a neighbor in each of $Y_i$, $Y_{i+1}$, $Y_{i+2}$ and $Y_{i+3}$,
then $Y_{i+4}$ is anti-complete to $N(z)$.}\label{itm:ZYantic}

It suffices to prove for $i=1$. Let $y_i\in Y_i$ be a neighbor of $z$ for $1\le i\le 4$.
By \ref{itm:ZY}, $z$ is anti-complete to $Y_5$ and so $N(z)\subseteq Y_1\cup Y_2\cup Y_3\cup Y_4$
by \ref{itm:ZR}.  Let $y_5$ be an arbitrary vertex in $Y_5$.
By \ref{itm:YiYi+1}, $y_5$ is complete to $Y_1\cup Y_4$.
Therefore it remains to show that $y_5$ is complete to $N(z)\cap (Y_2\cup Y_3)$.
If $y_5$ is not adjacent to a vertex $y\in N(z)\cap (Y_2\cup Y_3)$, then either
$3y_5$ or $2y_5$ forms a $2P_2$ with $zy$ depending on whether $y\in Y_2$ or $y\in Y_3$.  \e

\emitem {If $Z$ contains a vertex that has a neighbor in $Y_i$, $Y_{i+1}$, $Y_{i+2}$ and $Y_{i+3}$,  then $Y_{i+4}=\emptyset$.}
\label{itm:Yempty}

Let $z\in Z$ have neighbor in $Y_i$ for $1\le i\le 4$. By \ref{itm:ZY},  $z$ is anti-complete to $Y_5$.
If $Y_5$ contains a vertex $y$, then $N(z)\subseteq N(y)$ by \ref{itm:ZYantic}. This contradicts the assumption
that $G$ contains no pair of comparable vertices. \e
\end{enumerate}

Let $Y'_4=\{y\in Y_4: y \textrm{ is anti-complete to } Y_1\}$ and $Y''_4=Y_4\setminus Y'_4$.
Note that each vertex in $Y''_4$ has a neighbor in $Y_1$ by the definition and so is anti-complete to $Y_2$ by \ref{itm:YiYi-2Yi+2}. 
Similarly,  let $R'_4=\{r\in R_4: r \textrm{ is anti-complete to } R_1\}$ and $R''_4=R_4\setminus R'_4$. 
By \ref{itm:Ri-2Ri+2}, $R''_4$ is anti-complete to $R_2$. 
We now consider the following two cases.

\medskip
\noindent {\bf Case 1.}  $Z$ contains a vertex that has a neighbor in four $Y_i$.
It then follows from \ref{itm:Yempty} that $Y_j=\emptyset$ for some $j$.
We may assume by symmetry that $j=5$. These facts and \ref{itm:RiYi+1antic}
imply that $G$ admits the following $4$-coloring:
\begin{align*}
 &  Y_1\cup R_5\cup Y'_4 \cup \{5\},	 \\
 & Y_2\cup R_3\cup Y''_4\cup \{3\}, \\
& R_1\cup Z\cup R'_4\cup \{1\},	\\
& R_2\cup Y_3\cup R''_4 \cup \{2,4\}.\\ 
\end{align*}

\noindent {\bf Case 2.}  Each vertex in $Z$ has a neighbor in at most three $Y_i$.
Note that $G-Z$ admits the following $4$-coloring $\phi$ by \ref{itm:RiYi+1antic}:
\begin{align*}
I_1&= Y_1\cup R_5\cup Y'_4 \cup \{5\},	 \\
I_2&= Y_2\cup R_3\cup Y''_4\cup \{3\}, \\
I_3&=  R_1\cup Y_5\cup R'_4\cup \{1\},	\\
I_4&= R_2\cup Y_3\cup R''_4 \cup \{2,4\}.\\ 
\end{align*}
We now explain how to extend $\phi$ to $Z$.
For this purpose we partition $Z$ into the following two subsets:
\begin{equation*} \label{eq1}
\begin{split}
Z_1 & = \{z\in Z: z \textrm{ is anti-complete to either }  Y_3  \textrm{ or }  Y_5\}, \\
Z_2 &=Z\setminus Z_1.
\end{split}
\end{equation*}

We first claim that each vertex in $Z_2$ has a neighbor in $Y_4$. Suppose not.
Let $z\in Z_2$ be a vertex such that $z$ is anti-complete to $Y_4$. 
Since $z$ has a neighbor in both $Y_3$ and $Y_5$, $z$ is anti-complete to either $Y_1$ or $Y_2$ by 
the assumption that each vertex in $z$ has a neighbor in at most three $Y_i$.
If $z$ is anti-complete to $Y_1$, then $N(z)\subseteq Y_2\cup Y_3\cup Y_5\subseteq N(5)$.
If $z$ is anti-complete to $Y_2$, then $N(z)\subseteq Y_1\cup Y_3\cup Y_5\subseteq N(3)$.
In either case, it contradicts the assumption that $G$ contains no pair of comparable veritces.
This proves the claim. Consequently, $Z_2$ is anti-complete to $Y_1\cup Y_2$.
We now claim that each vertex in $Z_2$ is anti-complete to either $Y'_4$ or $Y''_4$.
Suppose not. Let $z\in Z_2$ have a neighbor $y'_4\in Y_4$ and a neighbor $y''_4\in Y''_4$.
By the definition of $Y''_4$, it follows that $y''_4$ has a neighbor $y_1\in Y_1$. 
Then $3y_1$ and $y'_4z$ induce a $2P_2$ since $y'_4$ is not adjacent to $y_1$.  
Now we can extend $\phi$ to $Z$ by adding each vertex in $Z_1$ to $I_3$ or $I_4$ and 
by adding each vertex in $Z_2$ to $I_1$ or $I_2$.
\end{proof}

\paragraph*{Acknowledgments.}
Serge Gaspers is the recipient of an Australian Research Council (ARC) Future Fellowship (FT140100048)
and acknowledges support under the ARC's Discovery Projects funding scheme (DP150101134).


\begin{thebibliography}{10}

\bibitem{CRST10}
M.~Chudnovsky, N.~Robertson, P.~Seymour, and R.~Thomas.
\newblock {$K_4$}-free graphs with no odd holes.
\newblock {\em J. Combin. Theory, Ser. B}, 100:313--331, 2010.

\bibitem{DLRR12}
K.~K. Dabrowski, V.~V. Lozin, R.~Raman, and B.~Ries.
\newblock Colouring vertices of triangle-free graphs without forests.
\newblock {\em Discrete Math.}, 312:1372--1385, 2012.

\bibitem{ELMM13}
L.~Esperet, L.~Lemoine, F.~Maffray, and G.~Morel.
\newblock The chromatic number of $\{{P}_5,{K}_4\}$-free graphs.
\newblock {\em Discrete Math.}, 313:743--754, 2013.

\bibitem{FXYY14}
G.~Fan, B.~Xu, T.~Ye, and X.~Yu.
\newblock Forbidden subgraphs and 3-colorings.
\newblock {\em SIAM J. Discrete Math.}, 28:1226--1256, 2014.

\bibitem{FHHHL17}
D.~J. Fraser, A.~M. Hamel, C.~T. Ho\`ang, K.~Holmes, and T.~P. LaMantia.
\newblock Characterizations of $(4{K_1, C_4, C_5})$-free graphs.
\newblock {\em Discrete Applied Mathematics}, 231:166--174, 2017.

\bibitem{GJPS17}
P.~A. Golovach, M.~Johnson, D.~Paulusma, and J.~Song.
\newblock A survey on the computational complexity of coloring graphs with
  forbidden subgraphs.
\newblock {\em J. Graph Theory}, 84:331--363, 2017.

\bibitem{Go04}
M.~C. Golumbic.
\newblock {\em Algorithmic Graph Theory and Perfect Graphs}.
\newblock North-Holland Publishing Co., 2004.

\bibitem{GHM03}
S.~Gravier, C.~T. Ho{\`a}ng, and F.~Maffray.
\newblock Coloring the hypergraph of maximal cliques of a graph with no long
  path.
\newblock {\em Discrete Math.}, 272:285--290, 2003.

\bibitem{Gy73}
A.~Gy{\'a}rf{\'a}s.
\newblock On ramsey covering numbers.
\newblock {\em Coll. Math. Soc. J{\'a}nos Bolyai}, 10:801--816, 1973.

\bibitem{Gy87}
A.~Gy{\'a}rf{\'a}s.
\newblock Problems from the world surrounding perfect graphs.
\newblock {\em Zastosowania Matematyki}, XIX:413--441, 1987.

\bibitem{HL15}
C.~T. Ho\`{a}ng and D.~A. Lazzarato.
\newblock Polynomial-time algorithms for minimum weighted colorings of
  $({P}_5,\overline{P_5})$-free graphs and similar graph classes.
\newblock {\em Discrete Appl. Math.}, 186:106--111, 2015.

\bibitem{HJP15}
S.~Huang, M.~Johnson, and D.~Paulusma.
\newblock Narrowing the complexity gap for colouring {$(C_s, P_t)$}-free
  graphs.
\newblock {\em The Computer Journal}, 58:3074--3088, 2015.

\bibitem{KMP18}
T.~Karthick, F.~Maffray, and L.~Pastor.
\newblock Polynomial cases for the vertex coloring problem.
\newblock {\em Algorithmica}, to appear, 2018.

\bibitem{KKTW01}
D.~Kr\'{a}l, J.~Kratochv\'{\i}l, Z.~Tuza, and G.~J. Woeginger.
\newblock Complexity of coloring graphs without forbidden induced subgraphs.
\newblock In {\em Proceedings of the 27th International Workshop on
  Graph-Theoretic Concepts in Computer Science}, Lecture Notes in Computer
  Science 2204, pages 254--262, 2001.

\bibitem{LM17}
V.~V. Lozin and D.~S. Malyshev.
\newblock Vertex coloring of graphs with few obstructions.
\newblock {\em Discrete Applied Mathematics}, 216:273--280, 2017.

\bibitem{Ma14}
D.~S. Malyshev.
\newblock The coloring problem for classes with two small obstructions.
\newblock {\em Optimization Letters}, 8:2261--2270, 2014.

\bibitem{RS04}
B.~Randerath and I.~Schiermeyer.
\newblock Vertex colouring and forbidden subgraphs-a survey.
\newblock {\em Graphs and Combinatorics}, 20:1--40, 2004.

\bibitem{Vi64}
V.~G. Vizing.
\newblock On an estimate of the chromatic class of a $p$-graph.
\newblock {\em Diskret. Analiz.}, 3:25--30, 1964.

\bibitem{Wa80}
S.~Wagon.
\newblock A bound on the chromatic number of graphs without certain induced
  subgraphs.
\newblock {\em J. Combin. Theory, Ser. B}, 29:345--346, 1980.

\end{thebibliography}

\end{document}